\newtheorem{dfn}{Definition}[section]
\newtheorem{thm}[dfn]{Theorem}
\newtheorem{prp}[dfn]{Proposition}
\newtheorem{lem}[dfn]{Lemma}
\newtheorem{cor}[dfn]{Corollary}
\theoremstyle{definition}
\newtheorem{exl}[dfn]{Example}
\newtheorem{rem}[dfn]{Remark}
\def\R{{\mathbb R}}
\def\Z{{\mathbb Z}}
\def\H{{\mathbb H}}
\def\phi{\varphi}
\title{Hyperbolization of cusps with convex boundary}
\date{December 20, 2015 \\ v2}
\author{Fran\c{c}ois Fillastre}
\address{Universit\'e de Cergy-Pontoise, UMR CNRS 8088, F-95000 Cergy-Pontoise, France}
\email{francois.fillastre@u-cergy.fr}
\author{Ivan Izmestiev}
\address{Department of Mathematics,
Free University of Berlin,
Arnimallee 2, D-14195 Berlin, Germany}
\email{izmestiev@math.fu-berlin.de}
\author{Giona Veronelli}
\address{Universit\'e Paris 13, Sorbonne Paris Cit\'e, LAGA, CNRS ( UMR 7539), 99 av. Jean-Baptiste Cl\'ement,
93430 Villetaneuse, France}
\email{veronelli@math.univ-paris13.fr}
\begin{document}

\begin{abstract}
We prove that for every metric on the torus with curvature bounded from below by $-1$ in the sense of Alexandrov there exists a hyperbolic cusp with convex boundary such that the induced metric on the boundary is the  given metric.
The proof is by polyhedral approximation.

This was the last open case of a general theorem: every metric with curvature bounded from below on a compact surface is isometric to a convex surface in a $3$-dimensional space form.
\end{abstract}

\maketitle

\tableofcontents

\section{Introduction}

\subsection{Statement of the results}

Let $T$ denote the $2$-dimensional torus.
A hyperbolic cusp $\mathrm{C}$ with convex boundary is a complete hyperbolic manifold of finite volume, homeomorphic to
$T\times [0,+\infty[$, and such that the boundary $\partial \mathrm{C}=T\times \{0\}$ is geodesically convex.
By the Buyalo convex hypersurface theorem \cite{AKP}, the induced inner metric
on $\partial \mathrm{C}$ has curvature bounded from below by $-1$ in the sense of Alexandrov --- in short, the metric is CBB($-1$), see 
Section~\ref{sec:cbb}. In the present paper, we prove that all the CBB($-1$) metrics on the torus are obtained in this way.

\begin{thm}\label{thm:main}
Let $m$ be a CBB($-1$) metric on the torus. Then 
there exists a hyperbolic cusp $\mathrm{C}$ with convex boundary such that the induced metric on $\partial \mathrm{C}$
 is isometric to $m$.
\end{thm}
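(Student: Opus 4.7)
The plan is to follow the polyhedral approximation strategy announced in the abstract, which naturally splits into three steps.

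\emph{Step 1: The polyhedral case.} First I would establish the analogous statement for polyhedral metrics: every hyperbolic cone metric on $T$ with all cone angles at most $2\pi$ (equivalently, a polyhedral CBB($-1$) metric) is induced on the convex polyhedral boundary of some hyperbolic cusp $\mathrm{C}$. The natural tool is a variational principle in the spirit of Alexandrov--Bobenko--Izmestiev: fix a combinatorial type of triangulation, parametrize admissible cusps by the distances from the boundary vertices to the cusp point (in a suitable developing picture), build a concave functional on this parameter space whose gradient measures the discrepancy between the prescribed and the realized cone angles, and prove existence of a maximum by a properness/rigidity argument. Uniqueness is not needed here.

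\emph{Step 2: Approximation of the metric.} Next I would approximate the given CBB($-1$) metric $m$ by polyhedral CBB($-1$) metrics $m_n$ converging to $m$, say in the Gromov--Hausdorff sense. The standard construction triangulates $T$ by small geodesic triangles of mesh $\epsilon_n\to 0$ and replaces each triangle by its hyperbolic comparison triangle. The crucial verification is that the resulting hyperbolic cone surface is still CBB($-1$), i.e.\ that the angle sum at every vertex of the approximating polyhedron is $\le 2\pi$; this should follow from Toponogov-type comparison applied to the original CBB($-1$) metric $m$.

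\emph{Step 3: Passing to the limit.} By Step~1, each $m_n$ is realized by a cusp $\mathrm{C}_n$ with convex polyhedral boundary. I would then extract a Gromov--Hausdorff limit $\mathrm{C}$ of the $\mathrm{C}_n$, identify it with a complete hyperbolic cusp with convex boundary, and check that the induced boundary metric coincides with $m$.

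The main obstacle will be Step~3: one must rule out pathological degenerations of $\mathrm{C}_n$ (boundary collapsing, pleating angles blowing up, the cusp point running off to infinity, etc.). The uniform convergence $m_n\to m$ gives uniform bounds on diameter and area of $\partial\mathrm{C}_n$, and hence, via a Gauss--Bonnet/Schl\"afli type identity, on $\Vol(\mathrm{C}_n)$ and on the distance from $\partial\mathrm{C}_n$ to the thin part. The real work is to convert these bounds into compactness in the class of hyperbolic cusps with CBB($-1$) convex boundary, and to establish continuity of the boundary-metric map under this convergence. I expect this to rest on the stability theory for Alexandrov spaces with curvature bounded below, combined crucially with the convexity of the boundary inside the ambient hyperbolic manifold, which prevents the boundary from oscillating wildly even at the polyhedral scale.
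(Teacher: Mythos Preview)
Your three-step outline coincides with the paper's strategy: polyhedral realization (Theorem~\ref{thm:main1poly}, cited from \cite{FI} rather than reproved here), polyhedral approximation of $m$ (Theorem~\ref{thm: aprox pol}), and passage to the limit. Your description of Steps~1 and~2 matches the paper; in particular the variational scheme you sketch for Step~1 is precisely what \cite{FI} does.

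The substantive difference is in Step~3. You propose to take a Gromov--Hausdorff limit of the cusps $\mathrm{C}_n$ and invoke Alexandrov stability. The paper does something more concrete and arguably more robust: it passes to the universal cover and, after normalizing so that the parabolic fixed point is $\infty$, writes each $\partial\widetilde{\mathrm{C}_n}$ as the \emph{horograph} of a $\Gamma_n$-periodic function $u_n$ on a fixed horosphere $H\cong\R^2$. The key observation (Proposition~\ref{prop:car u}) is that horoconvexity of $u_n$ is equivalent to ordinary convexity of $x\mapsto e^{-2u_n(x)}+\|x\|^2$, so compactness of the $u_n$ reduces to classical convex-function compactness (Corollary~\ref{cor : equilip}). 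Non-degeneration of the lattices $\Gamma_n$ is handled directly via the area lower bound (Lemma~\ref{lem : group conv}), and convergence of the induced metrics $d_{u_n}\to d_u$ is proved by hand using the Busemann--Feller projection (Lemma~\ref{cor:BF2}) and the explicit length formula~\eqref{eq:length}.

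Your abstract approach is plausible but would have to confront exactly the issues you flag: the $\mathrm{C}_n$ are non-compact, so you need \emph{pointed} GH convergence; you must show the limit is again a hyperbolic cusp (not, say, a complete hyperbolic manifold with a different end structure), which amounts to controlling the parabolic holonomy $\Gamma_n$; and you must prove that the boundary-metric map is continuous in this topology. The paper's horograph calculus handles all three points simultaneously and elementarily, at the price of being specific to the parabolic/cusp setting. Your route, if carried out, would be more in the spirit of Slutskiy's treatment of the higher-genus case \cite{slu} and might generalize better, but would require importing or reproving a fair amount of Alexandrov-space convergence machinery that the paper avoids.
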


Throughout the paper, by  induced metric we mean an intrinsic metric.
Examples of CBB($-1$) metrics on the torus are distances defined by Riemannian metrics of curvature $\geq -1$.
Using classical regularity results by Pogorelov (see below), Theorem~\ref{thm:main} implies the following. 

\begin{thm}\label{thm:cusp lisse}
Let $g$ be a smooth Riemannian metric of curvature $> -1$ on the torus.
Then  
there exists a hyperbolic cusp $\mathrm{C}$ with smooth strictly convex boundary such that the induced Riemannian metric on
$\partial \mathrm{C}$ is isometric to $g$. 
\end{thm}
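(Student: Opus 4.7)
The plan is to derive Theorem~\ref{thm:cusp lisse} as a regularity upgrade of Theorem~\ref{thm:main}. Since any smooth Riemannian metric $g$ on $T$ of sectional curvature $>-1$ is in particular a CBB$(-1)$ metric in the sense of Alexandrov (a smooth Riemannian metric with curvature $\geq\kappa$ defines an intrinsic metric which is CBB$(\kappa)$), Theorem~\ref{thm:main} produces a hyperbolic cusp $\mathrm{C}$ with convex boundary $\partial\mathrm{C}$ whose induced intrinsic metric is isometric to $g$. It remains to show that $\partial \mathrm{C}$ is automatically a smooth strictly convex surface inside the hyperbolic manifold $\mathrm{C}$.

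The first step is to verify strict convexity, which comes from the strict curvature inequality. By the Gauss equation, at any smooth point of $\partial \mathrm{C}$ with principal curvatures $k_1,k_2$, one has $K_{\mathrm{int}} = k_1 k_2 - 1$, where $K_{\mathrm{int}}$ is the intrinsic Gauss curvature. Because $\partial \mathrm{C}$ is a priori only known to be convex, the shape operator is, at best, a nonnegative distribution; however, the hypothesis $K_g > -1$ forces $k_1 k_2 > 0$ wherever the Gauss equation makes sense, which combined with nonnegativity rules out the degenerate cases (flat pieces, ridges, cone points) and leaves only the strictly convex smooth situation, to be confirmed by the regularity argument.

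The core step is to invoke Pogorelov-type regularity for convex surfaces in a space of constant curvature. Pogorelov's theorem (extended from $\R^3$ to $\H^3$) states that if a convex surface in a $3$-dimensional space form carries an intrinsic metric which is $C^{k,\alpha}$ Riemannian (with $k\geq 2$) of curvature strictly greater than that of the ambient space, then the surface itself is $C^{k,\alpha}$-embedded. In our setting the ambient curvature is $-1$, the abstract metric $g$ on $\partial \mathrm{C}$ is smooth with $K_g > -1$, so this theorem lifts $\partial \mathrm{C}$ from a merely convex surface to a smooth embedded surface. A standard bootstrap then gives $C^\infty$-regularity. Combined with the Gauss equation analysis above, we conclude $k_1, k_2 > 0$ everywhere, i.e.\ the boundary is smooth and strictly convex.

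The main obstacle I anticipate is purely bibliographical rather than mathematical: checking that the Pogorelov regularity theorem has been stated and proved in precisely the hyperbolic cusp setting used here (as opposed to, say, convex bodies in $\R^3$ or closed convex surfaces in $\H^3$). Since $\partial \mathrm{C}$ is a torus and the ambient manifold is noncompact with a cusp end, one must argue that local regularity suffices, lifting to the universal cover or using a local Pogorelov-type statement in $\H^3$, both of which are by now standard. Once this is arranged, the theorem follows directly from Theorem~\ref{thm:main}.
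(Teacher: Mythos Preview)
Your proposal is correct and matches the paper's approach exactly: the paper derives Theorem~\ref{thm:cusp lisse} from Theorem~\ref{thm:main} together with Pogorelov's regularity theorem (stated there as Theorem~\ref{thm:pogo}), with essentially no further argument. Your write-up simply spells out more of the details---the Gauss equation for strict convexity and the localization via the universal cover---that the paper leaves implicit.
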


Another examples of  CBB($-1$) metrics on the torus are  hyperbolic metrics with conical singularities of positive curvatures. Recall that the (singular) curvature at a cone singularity is $2\pi$ minus the total angle around the singularity. The proof of  Theorem~\ref{thm:main} will be done by polyhedral approximation, using the following result.

\begin{thm}[{\cite{FI}}]\label{thm:main1poly}
Let $m$ be a hyperbolic  metric with conical singularities of positive curvature on the torus. Then 
there exists a hyperbolic cusp $\mathrm{C}$ with convex polyhedral boundary such that the induced inner metric on
$\partial \mathrm{C}$ is isometric to $m$.
\end{thm}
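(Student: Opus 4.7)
The plan is to adapt the variational method of Alexandrov--Izmestiev to the cusp setting. First, fix a simplicial geodesic triangulation $\mathcal{T}$ of $(T,m)$ whose vertex set contains all cone points of $m$. The combinatorics of $\mathcal{T}$ will also prescribe the combinatorial type of a candidate convex polyhedral cusp $\mathrm{C}$. I would parametrize the space of hyperbolic cusps with polyhedral boundary combinatorially of type $\mathcal{T}$ by a vector $h = (h_1,\ldots,h_n) \in \R_{>0}^n$, where $h_i$ is the hyperbolic distance from a fixed horocycle deep inside the cusp to the boundary vertex $v_i$. Each face of $\mathcal{T}$ then lifts in $\H^3$ to a hyperbolic triangle determined by the three heights at its corners together with the ideal vertex at the cusp end. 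From this, the induced edge lengths $\ell_{ij}(h)$, the cone angles $\theta_i(h)$, and the dihedral angles along edges all become explicit trigonometric functions of $h$.

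The goal is then to solve the system $\ell_{ij}(h) = \ell_{ij}^m$ and $\theta_i(h) = \theta_i^m$, where the superscript $m$ denotes the values read off from the prescribed metric. Matching edge lengths amounts to fixing the metric structure of each hyperbolic triangle and is built into the construction via a discrete developing map, leaving the cone-angle system as the genuine problem. I would introduce a discrete Einstein--Hilbert type functional $S(h)$ on the admissible region whose gradient equals the angle defects $\omega_i - (2\pi - \theta_i(h))$, where $\omega_i = 2\pi - \theta_i^m$ is the target cone curvature at $v_i$. Such a functional can be constructed from hyperbolic volumes and edge-length/dihedral-angle products via a Schl\"afli-type identity. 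An infinitesimal rigidity argument for convex hyperbolic polyhedra should yield strict concavity of $S$, and an analysis of degenerations on the boundary of the admissible region should yield properness. These two facts together imply existence and uniqueness of a critical point, which provides the desired hyperbolic cusp.

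The main obstacle is the interplay between the combinatorial type and the convexity condition. As $h$ varies, a dihedral angle at some edge may reach $\pi$, meaning that the boundary loses strict convexity along that edge and the combinatorial type of the ``right'' polyhedron changes via an edge flip. To close the argument one must show that $S$ and its concavity glue consistently across flips, so that the variational problem extends coherently over the whole positive-curvature configuration space. One must also establish explicit trigonometric estimates ensuring that $S - \sum_i \omega_i h_i$ diverges to $-\infty$ as $h$ approaches the loci where some $h_i \to 0$ or $\infty$ or where a face triangle degenerates. Producing these properness estimates and handling the flips is the technical core of the proof, while the essentially algebraic pieces---writing down $S$, verifying the Schl\"afli-type identity, and proving local rigidity---are comparatively routine.
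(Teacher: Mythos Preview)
The paper does not prove Theorem~\ref{thm:main1poly}; it is quoted as an input from \cite{FI} and used as a black box in the polyhedral approximation argument.  Your sketch is, in outline, precisely the method of \cite{FI}: the paper itself remarks in Section~1.3 that \cite{FI} establishes Theorem~\ref{thm:main1poly} via a discrete Hilbert--Einstein functional, following \cite{ivan1,ivan2}.  So there is no discrepancy of approach to report.

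One clarification is worth making.  In the cusp setting the induced metric on the polyhedral boundary is $m$ by construction once the heights $h_i$ are chosen, so the cone angles of $m$ at the $v_i$ are automatically correct.  The genuine system to solve is that the total dihedral angle around each \emph{radial} edge $[v_i,\infty]$ equals $2\pi$, so that the glued semi-ideal tetrahedra form an honest hyperbolic manifold rather than a cone-manifold.  The gradient of the discrete Hilbert--Einstein functional encodes exactly these radial curvatures, not the surface cone angles $\theta_i^m$.  Your write-up conflates the two; the plan is otherwise sound and matches \cite{FI}, including the need to handle edge flips and to prove properness at the boundary of the admissible region.
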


Actually it is proved in \cite{FI} that the cusp $\mathrm{C}$ in the statement of Theorem~\ref{thm:main1poly} is unique.
One can hope a uniqueness result in Theorem~\ref{thm:main} (that would also imply uniqueness in Theorem~\ref{thm:cusp lisse}). This should be  the subject of a forthcoming paper.

The above statements were the last steps in order to get the following general statement.

\begin{thm}\label{thm:general cbb}
Let $m$ be a CBB($k$) metric on a compact surface. Then $m$ is isometric to a convex surface $S$ in a Riemannian space of constant curvature $k$.
Moreover
\begin{itemize}
\item if $m$ is a  metric of curvature $k$ with conical singularities of positive curvature, 
then $S$ is polyhedral,
\item if $m$ comes from a smooth Riemannian metric with curvature $>k$, then 
$S$  is smooth and strictly convex.
\end{itemize}
\end{thm}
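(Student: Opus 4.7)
The plan is to derive Theorem~\ref{thm:general cbb} by a case analysis on the topology of the compact surface $\Sigma$ and on the sign of $k$, by combining Theorem~\ref{thm:main} with the previously known realization results. A preliminary step uses the generalized Gauss--Bonnet formula on Alexandrov surfaces together with the CBB($k$) assumption, which gives the constraint $k\cdot\operatorname{Area}(\Sigma)\leq 2\pi\chi(\Sigma)$. Hence $k>0$ forces $\Sigma=\mathbb{S}^{2}$ (restricting to orientable surfaces); $k=0$ forces $\chi(\Sigma)\geq 0$, i.e.\ $\Sigma\in\{\mathbb{S}^{2},T^{2}\}$; and $k<0$ places no restriction on $\Sigma$.

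Next I would treat each remaining case. For $\Sigma=\mathbb{S}^{2}$ and any $k$, the existence of a convex realization as the boundary of a convex body in the simply connected $3$-dimensional space form of curvature $k$ is Alexandrov's classical embedding theorem. For $\Sigma=T^{2}$ with $k=0$, Gauss--Bonnet is saturated, so $m$ is flat, and it embeds as the totally geodesic (hence convex) boundary $T^{2}\times\{0\}$ of the flat product manifold $T^{2}\times[0,+\infty)$. For $\Sigma=T^{2}$ with $k<0$, after rescaling to $k=-1$, Theorem~\ref{thm:main} produces the required hyperbolic cusp. For $\chi(\Sigma)<0$ one must have $k<0$, and after rescaling to $k=-1$ the realization of $m$ as the convex boundary of a Fuchsian-type hyperbolic manifold homeomorphic to $\Sigma\times[0,+\infty)$ is furnished by the analogous theorem already known in the literature for higher-genus surfaces.

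For the two refinements, I would appeal to the more precise versions of the corresponding existence results. If $m$ is a constant curvature $k$ metric with conical singularities of positive singular curvature, then the polyhedral Alexandrov theorem on $\mathbb{S}^{2}$, Theorem~\ref{thm:main1poly} on $T^{2}$, and the analogous higher-genus polyhedral theorem yield a polyhedral realization $S$. If $m$ comes from a smooth Riemannian metric with $K>k$ strictly, then Pogorelov's regularity theorem, applied to the convex surface produced in the CBB step, gives the smooth strictly convex conclusion---this is exactly the mechanism by which Theorem~\ref{thm:cusp lisse} is deduced from Theorem~\ref{thm:main}.

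The crux of the whole statement is precisely the hyperbolic cusp case $(\Sigma,k)=(T^{2},-1)$, which is Theorem~\ref{thm:main} and constitutes the main technical contribution of the paper; the remaining pieces are either classical (Alexandrov, Pogorelov), trivial (flat torus), or already established in the literature for higher genus. Thus once Theorem~\ref{thm:main} is in hand, Theorem~\ref{thm:general cbb} is obtained by packaging all the cases together.
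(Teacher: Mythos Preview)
Your proposal is correct and follows essentially the same route as the paper: the paper likewise reduces Theorem~\ref{thm:general cbb} to a case analysis governed by the Gauss--Bonnet constraint (Lemma~\ref{lem:GB}), then assembles the sphere cases (Alexandrov), the flat torus case (trivial), the hyperbolic torus case (Theorem~\ref{thm:main}), and the higher-genus Fuchsian case (Theorem~\ref{thm:fuchs general}), with the polyhedral and smooth refinements coming from the corresponding polyhedral existence theorems and from Pogorelov's regularity (Theorem~\ref{thm:pogo}). The only cosmetic difference is that the paper first packages all the $k=-1$ cases into Theorem~\ref{thm:general hyperbolic} before stating the general conclusion.
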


In Section~\ref{1.2} we will recall all the results leading to Theorem~\ref{thm:general cbb}. 
The proof of Theorem~\ref{thm:main} is based on a general result of polyhedral approximation that is recalled in Section~\ref{sec:cbb} (Theorem~\ref{thm: aprox pol}). Going to the universal cover, 
boundaries of convex hyperbolic cusps are seen as convex surfaces of the hyperbolic space invariant under the action of a group of isometries acting cocompactly on a horosphere. Such surfaces are graphs of \emph{horoconvex functions} defined on the horosphere. They are introduced in Section~\ref{sub:horo}. They can be written in terms of convex functions on the plane, hence they will inherit strong properties from convex functions.

Then we prove Theorem~\ref{thm:main} in Section~\ref{sec:proof}.
Theorem~\ref{thm:main1poly} and Theorem~\ref{thm: aprox pol} give a sequence of polyhedral surfaces in $\H^ 3$. One shows that this sequence converges to a convex surface, invariant under the action of a group $\Gamma$. The main point is to check that the induced metric on the quotient of the surface by $\Gamma$ is isometric to the metric $m$ (Section~\ref{sec:conv met}).

\subsection{Hyperbolization of products manifolds}\label{1.2}

The notion of metric space of non-negative curvature  was introduced by A.D.~Alexandrov
in order to describe  the induced metric on the boundary of
convex bodies in $\R^3$. He proved that this property characterizes 
the convex bodies in the sense described below. Here we list several theorems of existence.
\begin{thm}[{\cite{al}}]\label{thm:cbb0}
Any CBB($0$) metric on the sphere is isometric to the boundary of a convex body in $\R^3$.

\end{thm}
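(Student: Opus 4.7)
The plan is to follow the polyhedral approximation strategy that is the template for the whole paper, applied now in the non-negatively curved, Euclidean setting. First I would approximate the given CBB$(0)$ metric $m$ on $\Sph^2$ by a sequence $m_n$ of Euclidean polyhedral metrics with conical singularities of positive curvature. By a general approximation theorem (analogous to the Theorem~\ref{thm: aprox pol} alluded to in the paper), such polyhedral $m_n$ can be chosen so that they converge to $m$ uniformly as distance functions on $\Sph^2$. Since each $m_n$ is a cone-metric on the sphere whose singular curvatures are positive and sum to $4\pi$ by the combinatorial Gauss--Bonnet formula, it is a candidate for the polyhedral realization theorem.

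Next I would invoke Alexandrov's realization theorem for convex polyhedra: every polyhedral metric of non-negative curvature on $\Sph^2$ is isometric to the boundary of a convex polytope $P_n\subset\R^3$, unique up to rigid motion. This is the non-smooth analogue of Theorem~\ref{thm:main1poly} and is the genuinely hard step; its usual proof proceeds either by Alexandrov's invariance-of-domain / continuity method on the space of polyhedral metrics with fixed combinatorics, or by a variational argument. I would take this as the key input, parallel to how the present paper uses Theorem~\ref{thm:main1poly}.

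With the sequence $(P_n)$ in hand, I would obtain a limiting convex body. After normalizing by translation so that each $P_n$ contains the origin, diameter bounds for $P_n$ follow from the fact that the intrinsic diameter of $\partial P_n$ dominates the extrinsic one, and the intrinsic diameters converge to $\operatorname{diam}(\Sph^2,m)<\infty$. Hence the $P_n$ lie in a fixed ball, and by Blaschke's selection theorem a subsequence converges in the Hausdorff metric to a convex body $K\subset\R^3$. One must also rule out degeneration of $K$ to a set of lower dimension; this follows from a lower bound on the area of $\partial P_n$ (which converges to the Hausdorff $2$-measure of $(\Sph^2,m)$, positive by the CBB$(0)$ hypothesis).

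The remaining and most delicate step is to identify the induced intrinsic metric on $\partial K$ with $m$. Hausdorff convergence of convex bodies implies convergence of boundaries as subsets of $\R^3$, and by a theorem of Alexandrov on the continuity of the intrinsic metric under convergence of convex surfaces, the induced metrics $m_{P_n}$ converge to the induced metric $m_K$ on $\partial K$ uniformly after identification via a suitable homeomorphism. Combined with $m_n\to m$, this gives an isometry $(\Sph^2,m)\cong(\partial K,m_K)$, proving the theorem. The main obstacle throughout is precisely this passage from extrinsic Hausdorff convergence to intrinsic metric convergence on the boundaries, together with controlling the homeomorphisms used to compare the metrics; this is exactly the analytic core that the present paper must reprove in the hyperbolic cusp setting in Section~\ref{sec:conv met}.
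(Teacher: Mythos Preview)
The paper does not give its own proof of Theorem~\ref{thm:cbb0}; it is simply quoted from \cite{al} as a known result. The only indication the paper gives about the argument is the remark in Section~1.3 that ``Theorem~\ref{thm:cbb0} was proved by polyhedral approximation'' based on Theorem~\ref{thm:al poly eucl}. So there is no detailed proof in the paper to compare your proposal against.

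That said, your outline is exactly the classical Alexandrov scheme the paper alludes to: approximate by polyhedral CBB$(0)$ metrics, realize each by a convex polytope via Theorem~\ref{thm:al poly eucl}, extract a Hausdorff-convergent subsequence via Blaschke selection after normalizing and bounding diameters, exclude degeneration using area, and then pass to the limit in the induced intrinsic metrics. This is correct in spirit and matches both Alexandrov's original argument and the template the present paper follows for the cusp case. One minor point: in the Euclidean sphere case the limit $K$ may degenerate to a doubly covered convex planar region (a ``flat'' convex body), and this is allowed rather than excluded; the area lower bound only rules out degeneration to a segment or point.
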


The hyperbolic version is as follows.

\begin{thm}[{\cite{al}}]\label{thm:ale1}        
Any CBB($-1$) metric on the sphere is isometric to the boundary of a compact convex set in $\H^3$.

\end{thm}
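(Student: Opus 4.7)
The plan is to mimic exactly the polyhedral approximation strategy the paper uses for the torus case, but with the sphere and compact convex sets in $\H^3$ replacing the torus and hyperbolic cusps. The key inputs are (i) a polyhedral approximation theorem saying that any $\CBB(-1)$ metric on $\Sph^2$ is the limit, in a suitable topology, of hyperbolic metrics $m_n$ with finitely many conical singularities of positive curvature, and (ii) the polyhedral Alexandrov realization theorem in $\H^3$, which produces for each $m_n$ a compact convex polytope $P_n\subset\H^3$ whose boundary carries the metric $m_n$.

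First I would fix a $\CBB(-1)$ metric $m$ on $\Sph^2$ and invoke the polyhedral approximation result to obtain a sequence $m_n$ of hyperbolic polyhedral metrics converging to $m$ (for instance uniformly after identification of $(\Sph^2,m_n)$ and $(\Sph^2,m)$ through bi-Lipschitz maps whose dilatations tend to $1$). By the polyhedral version of the theorem in $\H^3$, each $m_n$ is realized by the boundary of a compact convex polytope $P_n\subset\H^3$. Since the intrinsic diameters $\operatorname{diam}(m_n)$ converge to $\operatorname{diam}(m)$ and for a convex body in $\H^3$ the extrinsic diameter is bounded by the intrinsic one, the $P_n$ have uniformly bounded diameter. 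After applying a suitable ambient isometry to each $P_n$ to fix a common basepoint, a Blaschke-type selection argument in $\H^3$ yields a subsequence converging in the Hausdorff topology to a compact convex set $K\subset\H^3$.

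The next step is to identify the induced intrinsic metric on $\partial K$ with $m$. Here one uses that Hausdorff convergence of convex bodies in a space form implies convergence of boundaries (in Hausdorff sense as sets) together with convergence of areas, and, more importantly, that the induced intrinsic metrics on the boundaries of convex bodies are lower semicontinuous under Hausdorff limits, while the approximating metrics $m_n$ converge to $m$. Combining these facts, one shows that the intrinsic metric $d_K$ induced on $\partial K$ coincides with $m$ through the natural identification given by the radial (or nearest-point) projection between boundaries.

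The main obstacle I expect is precisely this last identification of the limit metric. Hausdorff convergence of the sets $P_n\to K$ does not automatically imply convergence of the intrinsic boundary distances, since intrinsic distance is only semicontinuous in general; one has to rule out the creation of shortcuts (degeneration of $\partial K$, or pinching that would make the limit metric strictly shorter than $m$). This is handled by an a priori lower bound, coming from the $\CBB(-1)$ condition on each $m_n$ and preserved in the limit, together with a length-minimizing-curve argument showing that any path in $\partial K$ can be approximated by paths in $\partial P_n$ without increase of length in the limit. Modulo this analytic step, the theorem follows, and the overall structure is identical to the proof scheme that the paper later carries out in detail for the torus/cusp case.
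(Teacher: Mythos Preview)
The paper does not itself prove this theorem: it is a classical result of Alexandrov, cited from \cite{al}, and the paper only remarks (after Theorem~\ref{thm:ale2}) that it is obtained by polyhedral approximation from the polyhedral case, Theorem~\ref{thm:alexpoly}. Your outline is exactly this classical scheme, so there is nothing further in the paper to compare against.

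One comment on your sketch itself. The step you correctly flag as the main obstacle---that Hausdorff convergence $P_n\to K$ of compact convex bodies in $\H^3$ implies convergence of the induced intrinsic metrics on the boundaries---is not something you need to improvise from semicontinuity and ad hoc length-approximation arguments; it is itself a theorem of Alexandrov (the paper alludes to it in the remark following Lemma~\ref{lem:conv uni}). Your description is morally the content of its proof (one inequality via the Busemann--Feller nearest-point projection, the other via Arzel\`a--Ascoli applied to shortest paths and lower semicontinuity of length), but the statement you actually invoke is the full convergence of the metrics, not merely a one-sided bound. Note also that the limit $K$ may degenerate to a doubly covered planar convex region; this is permitted in Alexandrov's formulation and still carries a $\CBB(-1)$ metric on its ``boundary'', so no separate non-degeneracy argument is required.
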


Let us mention the following general local result, obtained 
from Theorem~\ref{thm:ale1}  and a gluing theorem.

\begin{thm}[{\cite{al}}]
Each point on a CBB($-1$) surface has a neighbourhood isometric to a convex surface in $\H^3$.
\end{thm}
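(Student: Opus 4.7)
The plan is to reduce to Theorem~\ref{thm:ale1} by closing up a neighbourhood of the given point $p \in S$ into a CBB($-1$) sphere. Explicitly, I would first choose a small geodesic triangle $T \subset S$ containing $p$ in its interior; such a $T$ exists because any point of a CBB surface has arbitrarily small neighbourhoods in which every pair of points is joined by a unique minimizing geodesic, so three nearby points span a well-defined geodesic triangle. Let $\Sigma$ be the topological $2$-sphere obtained by gluing two isometric copies $T_1, T_2$ of $T$ along their boundaries via the identity map.

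The key step is to verify that $\Sigma$, endowed with the induced intrinsic metric, is itself CBB($-1$). In the interior of each $T_i$ nothing has changed and the bound is inherited from $S$. Along an interior point of a common side, $\Sigma$ is locally the double of a CBB($-1$) half-disk across a geodesic boundary, a situation directly covered by the standard Alexandrov gluing theorem for two-dimensional spaces of curvature bounded below (the shared arc being geodesic in both copies, the ``geodesic curvatures'' from the two sides sum to zero, which is exactly the gluing hypothesis). At each of the three corners, the cone angle in $\Sigma$ equals $2\alpha_i$, where $\alpha_i<\pi$ is the angle of $T$ at that corner; since $2\alpha_i<2\pi$, these are cone singularities of positive singular curvature and are harmless for the lower curvature bound. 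With $\Sigma$ now a CBB($-1$) metric on the sphere, Theorem~\ref{thm:ale1} produces a compact convex set $K\subset\H^3$ whose boundary is isometric to $\Sigma$, and the image of $T_1\subset\Sigma$ under this isometry is a convex piece of $\partial K$ isometric to $T$, hence furnishes a convex surface in $\H^3$ isometric to a neighbourhood of $p$.

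The main obstacle is the gluing verification: one must be precise about what ``the double is CBB'' means at the seams (interiors of the sides) and at the corners. At the corners it reduces to the explicit inequality $2\alpha_i<2\pi$, but at the seams one needs the two-dimensional Alexandrov gluing principle for CBB surfaces along geodesic boundaries. An alternative route, bypassing the corners entirely, would be to glue along a single convex curve, for instance the boundary of a sufficiently small metric ball around $p$, which in a CBB($-1$) surface has non-negative geodesic curvature for small radii; but in either formulation the essential ingredient is an Alexandrov-type gluing lemma combined with Theorem~\ref{thm:ale1}, exactly as the excerpt indicates.
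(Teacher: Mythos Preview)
Your proposal is correct and follows exactly the route the paper indicates: the paper does not spell out a proof but says the result is ``obtained from Theorem~\ref{thm:ale1} and a gluing theorem'', citing Alexandrov. Your doubling of a small geodesic triangle (or, alternatively, of a small convex ball) to produce a CBB($-1$) sphere, followed by an appeal to Theorem~\ref{thm:ale1}, is precisely a concrete instantiation of that sentence, with the ``gluing theorem'' being Alexandrov's gluing lemma for CBB surfaces along convex boundaries.
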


Forgetting the part of the hyperbolic space outside  the convex body, one derives from Theorem~\ref{thm:ale1} the following hyperbolization theorem for the ball.

\begin{thm}\label{thm:ale2}
Let $m$ be a CBB($-1$) metric on the sphere. Then 
there exists a hyperbolic ball $M$ with convex boundary  such that the induced  metric on
$\partial M$ is isometric to $m$.
\end{thm}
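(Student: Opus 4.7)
The plan is to derive Theorem~\ref{thm:ale2} as an immediate consequence of Theorem~\ref{thm:ale1}, exactly as the sentence introducing the statement suggests. Applying Theorem~\ref{thm:ale1} to the given CBB($-1$) metric $m$ on the sphere yields a compact convex set $K\subset\H^3$ whose intrinsic boundary metric is isometric to $m$. The candidate hyperbolic ball is then $M:=K$ equipped with the hyperbolic metric inherited from $\H^3$; one simply forgets the complement $\H^3\setminus K$.

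It remains to check that $M$ satisfies the definition of a hyperbolic ball with convex boundary. In the generic case $K$ has non-empty interior: by convexity $K$ is then homeomorphic to a closed $3$-ball, its interior carries a smooth hyperbolic metric, and $\partial K=\partial M$ is a convex surface of $\H^3$ whose intrinsic metric is, by construction, isometric to $m$. Convexity of $K$ in $\H^3$ directly implies that $\partial M$ is geodesically convex, since any sufficiently short geodesic of $M$ joining two boundary points stays on $\partial M$. If $K$ happens to be degenerate (contained in a totally geodesic plane of $\H^3$), one replaces $M$ by the abstract doubling of $K$ along its relative boundary, a degenerate hyperbolic $3$-manifold homeomorphic to a ball whose boundary realizes the doubled planar metric $m$. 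The main obstacle is essentially absent here: the hard analytic content is packed into Theorem~\ref{thm:ale1}, and the remaining work is only the reinterpretation of the convex body as a hyperbolic $3$-manifold with boundary.
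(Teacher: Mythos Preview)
Your proposal is correct and matches the paper's approach exactly: the paper derives Theorem~\ref{thm:ale2} from Theorem~\ref{thm:ale1} simply by ``forgetting the part of the hyperbolic space outside the convex body,'' which is precisely what you do. Your additional remarks on the ball topology and on the degenerate planar case are reasonable elaborations of a step the paper leaves implicit.
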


Actually Theorem~\ref{thm:ale2} also implies Theorem~\ref{thm:ale1}, because in this case the developing map is an 
isometric embedding \cite[Proposition I.1.4.2.]{Notesonnotes}.  Theorem~\ref{thm:ale1}
and  Theorem~\ref{thm:ale2} are proved by polyhedral approximation. For example,
Theorem~\ref{thm:ale1} is proved from the following particular case.

\begin{thm}\label{thm:alexpoly}
Let $m$ be a hyperbolic metric with conical singularities of positive curvature on the sphere. Then 
there exists a hyperbolic ball $M$ with convex polyhedral boundary  such that the induced  metric on
$\partial M$ is isometric to $m$.
\end{thm}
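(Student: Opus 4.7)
The plan is to run Alexandrov's classical invariance-of-domain (continuity) method. Fix $n\ge 3$ and form two moduli spaces: $\mathcal M_n$, hyperbolic metrics on $S^2$ with $n$ labelled cone singularities of positive curvature modulo isometry; and $\mathcal P_n$, convex polyhedral balls in $\H^3$ with $n$ labelled boundary vertices modulo ambient isometry. A parameter count (conformal moduli plus curvatures on one side, vertex positions modulo isometry on the other) shows that both are smooth manifolds of dimension $3n-6$; both are connected, and $\mathcal P_n$ is clearly nonempty. Define $\Phi_n\colon\mathcal P_n\to\mathcal M_n$ as the induced intrinsic boundary metric map, transferring vertex labels to cone points. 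Continuity of $\Phi_n$ is immediate; the strategy is to prove that $\Phi_n$ is simultaneously a local homeomorphism and proper, hence a covering map, hence surjective onto the connected target.

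The local-homeomorphism step reduces, via invariance of domain, to the \emph{infinitesimal rigidity} of convex polyhedral spheres in $\H^3$: every infinitesimal deformation preserving the induced intrinsic metric must come from an infinitesimal ambient isometry. This is a Cauchy-type sign-counting argument on the $1$-skeleton, carried out in the hyperbolic setting either directly or via a Pogorelov-type linearization relating isometric deformations in $\H^3$ to infinitesimal Euclidean isometries in $\R^3$. I expect this rigidity step to be the main obstacle.

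For properness, consider $P_k\in\mathcal P_n$ with $\Phi_n(P_k)\to m\in\mathcal M_n$. Gauss--Bonnet fixes $\area(\partial P_k)=\sum_i\omega_i(m_k)-4\pi$ in terms of the (converging) total curvature, so the areas are uniformly bounded; diameters of $\partial P_k$ in the intrinsic metric are bounded by convergence in $\mathcal M_n$; and each $\omega_i(m_k)$ stays bounded away from $0$, preventing vertex collisions. After normalizing by an ambient isometry the vertices lie in a compact subset of $\H^3$, and a subsequence yields $P_\infty\in\mathcal P_n$ with $\Phi_n(P_\infty)=m$. Applying the resulting surjectivity to the given metric $m$ with its actual number of cone points produces the desired polyhedral ball. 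A conceptual alternative, which would also give uniqueness, is Izmestiev's variational discrete Hilbert--Einstein method: seek critical points of a concave functional on the support functions of polyhedral balls in $\H^3$; the key analytical input is again the nondegeneracy of its Hessian, which is equivalent to the same infinitesimal rigidity statement.
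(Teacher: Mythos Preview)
The paper does not prove this theorem: it is quoted as a classical result of Alexandrov, presented as the polyhedral input to Theorem~\ref{thm:ale1} (itself cited from \cite{al}). In Section~1.3 the authors recall that the Euclidean analogue, Theorem~\ref{thm:al poly eucl}, is proved by ``a continuity method, based on topological arguments, in particular the Domain Invariance Theorem,'' and they also mention the variational discrete Hilbert--Einstein approach of \cite{ivan1}. Your sketch is precisely the continuity method transported to $\H^3$, with the variational route noted as an alternative, so you are aligned with both strands the paper alludes to; there is simply no in-paper proof to compare against.

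Two remarks on the sketch itself. First, you conflate two distinct routes to the local-homeomorphism step: invariance of domain requires \emph{global} injectivity of $\Phi_n$, i.e.\ Cauchy's rigidity theorem for convex polyhedra in $\H^3$, whereas infinitesimal rigidity feeds into the inverse function theorem and presupposes that $\mathcal M_n$, $\mathcal P_n$ are smooth manifolds and that $\Phi_n$ is $C^1$. Either works, but they are separate arguments, and Alexandrov's original proof uses the former. Second, in the properness step, keeping each $\omega_i$ bounded away from $0$ does not by itself prevent two labelled vertices from colliding in $\H^3$ (their curvatures would simply add in the limit); what actually prevents it is that the intrinsic distances between distinct cone points stay bounded below, which follows from $m_k\to m$ in $\mathcal M_n$. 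With these corrections the outline is sound.
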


The following regularity result roughly says that if the metric on a convex surface in $\H^3$ is smooth, then
the surface is smooth.

\begin{thm}[{\cite[Theorem 1, chap. V §8]{pog}}]\label{thm:pogo}
Let $S$ be a surface with a $C^k$, $k\geq 5$, Riemannian metric of 
curvature $>-1$. 
If $S$ admits a convex isometric embedding into $\H^3$, 
then its image is a $C^{k-1}$ surface.  
\end{thm}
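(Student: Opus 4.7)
The plan is to reduce the statement to an interior regularity result for a Monge--Ampère equation, following the strategy Pogorelov developed in his work on isometric embeddings of convex surfaces.

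First I would work locally around an arbitrary point $p \in S$. In $\H^3$, introduce local coordinates adapted to $p$ (for instance Fermi coordinates normal to a supporting totally geodesic plane, or horospherical coordinates around a horosphere tangent to $S$ at $p$). Extrinsic convexity of $S$ then forces $S$ to be the graph of a locally convex function $u$ defined on a neighbourhood of the origin in $\R^2$. Pulling back the induced metric, the Gauss equation of the embedding expresses the intrinsic Gauss curvature $K$ in the form
\[
K + 1 \;=\; \frac{F(x, u, Du, D^2 u)}{G(x, u, Du)},
\]
where $F$ is a Monge--Ampère operator in $u$ (modified by lower-order hyperbolic correction terms) and $G>0$ is smooth in the first-order jet of $u$. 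Since the intrinsic metric is $C^k$ and $K>-1$ strictly, this is an elliptic Monge--Ampère equation with $C^{k-2}$ and strictly positive right-hand side.

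The second step is an a priori $C^{1,1}$ estimate for $u$. Convexity yields local Lipschitz regularity for free; the two-sided control on $K+1$ gives two-sided bounds on $\det D^2 u$. Combined with Pogorelov's auxiliary barrier construction, this yields strict convexity of $u$ and an interior bound on $D^2 u$. With $\det D^2 u$ bounded away from $0$ and $\infty$ and $D^2 u$ bounded, the linearization of the Monge--Ampère operator is uniformly elliptic, and Evans--Krylov theory upgrades the bound to $C^{2,\alpha}$ on interior balls.

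Once $u \in C^{2,\alpha}$, differentiating the equation yields a linear elliptic PDE for each first derivative $\partial_i u$ with $C^{\alpha}$ coefficients depending on $D^2 u$ and on the $C^{k}$ metric coefficients; Schauder theory then bootstraps regularity, each extra derivative of $u$ costing one derivative of the data. Starting from a $C^k$ metric, the scheme terminates at $u \in C^{k-1,\alpha}$ for every $\alpha \in (0,1)$, hence $S$ is locally (and by covering, globally) a $C^{k-1}$ surface. The genuinely hard step is the a priori strict convexity/$C^2$ estimate: one must rule out that $u$ develop a flat direction or a degenerate Hessian in the interior, which is the heart of Pogorelov's method and relies crucially on the strict inequality $K>-1$. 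In the hyperbolic setting one must additionally check that the lower-order terms produced by the Gauss equation do not spoil the sign conditions in the barrier argument, which is the one place where the ambient curvature $-1$ enters nontrivially.
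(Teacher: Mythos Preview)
The paper does not give its own proof of this theorem: it is quoted as a result from Pogorelov's book \cite[Theorem 1, chap.~V \S 8]{pog} and then used as a black box to derive Theorem~\ref{thm:cusp lisse} from Theorem~\ref{thm:main}. So there is nothing in the paper to compare your argument against.

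That said, your outline is a faithful summary of how Pogorelov's regularity theory for convex surfaces actually proceeds: write the surface locally as a graph, recognise the Gauss equation as an elliptic Monge--Amp\`ere (Darboux) equation whose right-hand side is strictly positive because $K>-1$, obtain interior strict convexity and a $C^{2}$ bound by a Pogorelov-type barrier, and then bootstrap via linear elliptic theory. Two small remarks. First, invoking Evans--Krylov is anachronistic relative to Pogorelov's original argument, which predates that theory and instead uses his own a priori estimates to reach $C^{2,\alpha}$; of course the modern route you sketch is perfectly valid. Second, the precise loss of one derivative (metric $C^k$ $\Rightarrow$ surface $C^{k-1}$) and the threshold $k\ge 5$ come out of the bookkeeping in the Darboux equation, where the coefficients involve derivatives of the metric; your sentence ``each extra derivative of $u$ costing one derivative of the data'' is the right heuristic, but if you were writing this up you would need to track the orders carefully to land exactly on $C^{k-1}$.
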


See \cite{CX15} for more precise results if  $S$ is homeomoprhic to the sphere, in particular if the curvature is $\geq -1$.
Theorem~\ref{thm:pogo} and Theorem~\ref{thm:ale1} immediately give the following.

\begin{thm}\label{thm:alexlisse}
Let $g$ be a smooth Riemannian metric with  curvature $>-1$ on a the sphere. Then there exists a  hyperbolic ball $M$ with smooth convex boundary such that 
the induced metric on $\partial M$ is isometric to $g$.
\end{thm}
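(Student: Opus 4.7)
The plan is to derive Theorem~\ref{thm:alexlisse} directly from the two ingredients cited just before its statement, namely Alexandrov's existence theorem (Theorem~\ref{thm:ale1}) and Pogorelov's regularity theorem (Theorem~\ref{thm:pogo}). The argument is essentially a one-line deduction; I would nevertheless unfold it into three steps to make the logical dependencies transparent.

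First, I would observe that if $g$ is a smooth Riemannian metric on $\Sph^2$ with sectional curvature $>-1$ everywhere (here ``sectional curvature'' just means Gauss curvature, since we are on a surface), then the induced length metric $m_g$ is automatically $\CBB(-1)$ in the sense of Alexandrov. This is a classical comparison statement: smoothness provides Jacobi-field control on distances, which in turn yields the Toponogov-style triangle comparison with the model plane of constant curvature $-1$ on sufficiently small scales, and the local-to-global principle for CBB spaces propagates it to all of $\Sph^2$. This step is completely standard and requires no new work.

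Second, I would apply Theorem~\ref{thm:ale1} to $m_g$: there exists a compact convex body $K\subset\H^3$ such that $\partial K$ with its induced intrinsic metric is isometric to $(\Sph^2,m_g)$. Restricting attention to $K$ itself and discarding $\H^3\setminus K$ produces a compact hyperbolic manifold with convex (possibly only $C^0$) boundary; this is exactly what is meant by a hyperbolic ball with convex boundary, as already exploited in the passage from Theorem~\ref{thm:ale1} to Theorem~\ref{thm:ale2}. At this stage the metric on $\partial K$ is isometric to $g$ as a length metric, but the regularity of the embedding is not yet controlled.

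Third, I would invoke Theorem~\ref{thm:pogo} to upgrade the regularity. Since $g$ is smooth and has curvature strictly greater than $-1$, Pogorelov's theorem asserts that the convex isometric embedding $\partial K\hookrightarrow \H^3$ produced in the previous step is smooth (in fact $C^{k-1}$ if we only assume $g\in C^k$ with $k\ge5$, which we iterate by taking $k$ arbitrarily large). Smoothness together with strict inequality on the curvature also implies that the embedded surface is strictly convex. Taking $M=K$ completes the proof. The only place where one might worry is the applicability of Pogorelov's result, which needs the embedding of Theorem~\ref{thm:ale1} to be the one whose regularity it governs; this is indeed the setting of \cite[Chap.~V]{pog}, so no additional argument is required.
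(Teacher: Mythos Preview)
Your proposal is correct and follows exactly the paper's own argument, which simply states that Theorem~\ref{thm:alexlisse} follows immediately from Theorem~\ref{thm:ale1} and Theorem~\ref{thm:pogo}. You have merely unfolded this one-line deduction into explicit steps, so there is nothing to add.
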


For metrics on a compact (connected) surface $S$ of genus $>1$, the following result was recently proved.

\begin{thm}[{\cite{slu}}]\label{thm:dima}
 Let $M$ be a compact connected 3-manifold with boundary of the
type $S \times [-1, 1]$. Let $m$ be
a CBB($-1$) metric on $\partial M$. Then there exists a
hyperbolic metric in $M$ with a convex boundary such that the  induced  metric
on $\partial M$ is isometric to $m$.
\end{thm}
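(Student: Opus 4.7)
The plan is to imitate the polyhedral approximation strategy used for Theorems~\ref{thm:main} and~\ref{thm:ale2}, adapted to a product manifold of higher genus. The starting point is a polyhedral analogue of the statement: for any hyperbolic metric $m_0$ on $S$ with conical singularities of positive curvature, one first produces a hyperbolic structure on $M\cong S\times[-1,1]$ with convex polyhedral boundary inducing $m_0$ on $\partial M$. This is the higher-genus counterpart of Theorem~\ref{thm:main1poly} (cusp case) and Theorem~\ref{thm:alexpoly} (ball case); such a polyhedral statement is naturally proved by a variational or degree-theoretic argument inside an appropriate moduli space of quasi-Fuchsian-type polyhedral manifolds, with the singular curvatures playing the role of prescribed data.

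Granting the polyhedral case, the main steps would be: (i) approximate the CBB($-1$) metric $m$ on $S$ by a sequence $(m_n)$ of hyperbolic cone metrics with positive singular curvatures, invoking the general polyhedral approximation theorem for CBB surfaces (Theorem~\ref{thm: aprox pol}); (ii) realise each $m_n$ as the boundary of a hyperbolic manifold $M_n\cong S\times[-1,1]$ with convex polyhedral boundary via the polyhedral step; (iii) pass to the universal cover, so that $\partial M_n$ lifts to a convex surface $\widetilde\Sigma_n\subset\H^3$ equivariant under some representation $\rho_n\colon\pi_1(S)\to\operatorname{Isom}(\H^3)$; (iv) after a suitable normalisation (fixing a basepoint and a frame), extract subsequential limits $\rho_\infty$ and $\widetilde\Sigma_\infty$; (v) take the quotient and check that the intrinsic metric on $\widetilde\Sigma_\infty/\rho_\infty(\pi_1(S))$ is isometric to $m$.

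The hard part is step (iv), i.e.\ compactness of the sequence of hyperbolic manifolds. In contrast with the spherical case, where the developing map is an isometric embedding and the ambient hyperbolic structure is essentially rigid, here the hyperbolisation carries genuine Teichm\"uller-like moduli, and a priori $\rho_n$ could degenerate --- a simple closed curve on $S$ could be pinched --- or the convex surfaces could escape to infinity. Ruling this out would require a priori geometric bounds, most naturally a uniform lower bound on the systole of the induced metric obtainable from the intrinsic data of $m$ via Alexandrov comparison with a reference hyperbolic metric on $S$, together with a bound on the extrinsic diameter of the convex hull coming from the positivity of the cone angles.

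Once compactness is secured, the identification of the limit boundary metric with $m$ proceeds along the lines of Section~\ref{sec:conv met}: convexity and $\rho_\infty$-equivariance pass to the limit, and the Gromov--Hausdorff convergence $m_n\to m$ combined with semicontinuity of intrinsic distances under convergence of convex surfaces forces the induced metric on $\widetilde\Sigma_\infty/\rho_\infty(\pi_1(S))$ to coincide with $m$.
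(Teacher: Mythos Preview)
The paper does not prove Theorem~\ref{thm:dima}; it is quoted from \cite{slu}. The only information the paper gives about that proof is the sentence immediately following the statement: ``The proof of Theorem~\ref{thm:dima} goes by \emph{smooth} approximation'', with Labourie's Theorem~\ref{thm:lab} supplying the approximating step. So the route taken in the reference is the opposite of yours: approximate $m$ by smooth metrics of curvature $>-1$, realise each one by Theorem~\ref{thm:lab}, and pass to the limit.

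Your polyhedral scheme is a different approach, and the paper itself flags the obstruction to it. Right after Theorem~\ref{thm:lab} it notes that for a hyperbolic cone metric on $S$ the convex realisation in a (quasi-Fuchsian) $S\times[-1,1]$ need \emph{not} have polyhedral boundary, because the boundary may meet the boundary of the convex core of $M$. Thus the ``polyhedral analogue'' you grant in step~(ii) --- a convex \emph{polyhedral} boundary inducing $m_0$ --- is not available in general; the known result \cite{slu2} only yields a convex boundary. Your outline could perhaps be salvaged by working with these non-polyhedral convex realisations, but then the clean finite-dimensional picture you invoke (``variational or degree-theoretic argument inside a moduli space of polyhedral manifolds'') is gone, and the compactness step~(iv) must also control the pleated part coming from the convex core. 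In the Fuchsian subcase this obstruction disappears (the convex core is a totally geodesic surface), and there Theorem~\ref{thm:f} does give a polyhedral realisation; but even for Theorem~\ref{thm:fuchs general} the paper points to the smooth-approximation proof, replacing Theorem~\ref{thm:lab} by Theorem~\ref{thm:gromov}.
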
 

The proof of Theorem~\ref{thm:dima} goes by smooth approximation. The smooth 
version of Theorem~\ref{thm:dima} is included in the following more general result.

\begin{thm}[{\cite{lab}}]\label{thm:lab}
Let $M$ be a compact manifold with boundary (different from the
solid torus) which admits a structure of a strictly convex hyperbolic manifold. Let
$g$ be a smooth  metric on $\partial M$ of  curvature $>-1$. Then there exists a convex hyperbolic metric  on $M$ which
induces $g$ on $\partial M$.
\end{thm}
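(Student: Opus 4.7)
The plan is to run a continuity method in the spirit of the Blaschke--Cohn-Vossen--Pogorelov tradition. Write $\mathcal{H}$ for the space of isotopy classes of strictly convex hyperbolic metrics on $M$, and $\mathcal{G}$ for the space of smooth Riemannian metrics on $\partial M$ of curvature $>-1$; equipping both with $C^{k,\alpha}$ Banach-manifold structures, the induced-metric map $\Phi:\mathcal{H}\to\mathcal{G}$, $h\mapsto h|_{\partial M}$, is the object of study. The source is non-empty by hypothesis, and an infinitesimal deformation count (matching the dimensions of Teichm\"uller-like data on the two sides) makes $\mathcal{H}$ and $\mathcal{G}$ have the same dimension in an appropriate sense. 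It then suffices to show that $\Phi$ is both \emph{open} and \emph{proper}, and that $\mathcal{G}$ is connected; the connectedness of $\mathcal{G}$ follows from the convexity of the curvature condition along linear interpolations.

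For openness I would prove that $\Phi$ is an immersion between manifolds of equal dimension, hence a local diffeomorphism. The content is an \emph{infinitesimal rigidity} statement: a non-trivial first-order deformation of a strictly convex hyperbolic structure must change the induced metric on the boundary to first order. The natural route is the second-variation formula for the volume (or, equivalently, a Bochner-type identity for Codazzi tensors on the convex boundary), in which the second fundamental form appears as a positive weight. Strict convexity produces a coercive quadratic form whose vanishing forces the deformation to be an ambient isometry. This is also the step where the hypothesis ``different from the solid torus'' must be used: the solid torus carries a genuine infinitesimal deformation (a shear along its core geodesic) which leaves the boundary conformal structure behaving pathologically, and obstructs coercivity.

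For properness I would take a sequence $g_n=\Phi(h_n)$ with $g_n\to g$ in $\mathcal{G}$ and extract a convergent subsequence of the $h_n$. Comparison with the model hyperbolic space bounds the principal curvatures of $\partial M$ in $(M,h_n)$, hence controls a one-sided collar. Together with uniform diameter and volume bounds coming from the fixed topology of $M$ and the $C^2$-bounds on $g_n$, Cheeger--Gromov compactness produces a limiting smooth hyperbolic metric $h_\infty$; the convexity of the boundary and strictness of the curvature inequality $>-1$ pass to the limit, placing $h_\infty\in\mathcal{H}$ with $\Phi(h_\infty)=g$.

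Combining these two steps, $\Phi(\mathcal{H})$ is open and closed in the connected space $\mathcal{G}$, hence equals it, which is the theorem. The main obstacle is certainly the infinitesimal rigidity in the openness step: it is the analytically delicate piece, it is where the exception for the solid torus enters, and it requires a careful argument showing that the boundary second fundamental form controls the relevant quadratic form on the space of deformations. The properness step, by contrast, is a fairly standard compactness exercise once the strictness of the convexity is used to rule out degenerations of the boundary into $\partial M$.
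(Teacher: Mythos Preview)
The paper does not prove this statement at all: Theorem~\ref{thm:lab} is quoted from \cite{lab} (Labourie) purely as background in Section~\ref{1.2}, with no argument given. There is therefore nothing in the present paper to compare your proposal against.

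That said, your outline is broadly in the spirit of Labourie's actual proof, which is indeed a deformation argument. A few cautions are in order. First, $\mathcal{H}$ and $\mathcal{G}$ are infinite-dimensional, so the phrase ``same dimension'' is not literally meaningful; what one needs is that the linearization of $\Phi$ is Fredholm of index zero, and the openness step is an implicit function theorem in Banach spaces rather than a finite-dimensional local diffeomorphism. Second, Labourie's infinitesimal rigidity is not obtained via a volume second-variation or Bochner identity on Codazzi tensors as you sketch; he reformulates the problem through the Gauss--Codazzi equations and treats it as an elliptic PDE on the boundary surface, using pseudo-holomorphic curve techniques. Your proposed route via a coercive quadratic form weighted by the second fundamental form is plausible heuristics but is not how the argument actually goes, and making it rigorous would be the real work. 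Third, the properness step in this setting is more delicate than a straight Cheeger--Gromov application: one must rule out degenerations of the hyperbolic structure (thin parts, cusps forming, the boundary touching the convex core), and this is where a substantial part of the effort lies.
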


See also \cite{sch}, which  contains
a uniqueness result.
Of course, one can take for metrics $m$ in the statement of Theorem~\ref{thm:dima} hyperbolic metrics on
$S$ with conical singularities of positive curvature \cite{slu2}. But the boundary of the solution is not necessarily of
polyhedral type. This is because the boundary may meet the  boundary of the convex core of $M$.
If $M$ is \emph{Fuchsian}, that is if its convex core is a totally geodesic surface,  
this cannot happen. Actually we have the following.

\begin{thm}[{\cite{f}}]\label{thm:f}
Let $m$ be a hyperbolic metric with conical singularities of positive curvature on a compact surface $S$ of 
genus $>1$. Then there exists a Fuchsian hyperbolic manifold $S\times[-1,1]$ with polyhedral convex boundary such that 
the induced metric on the boundary components $S\times \{-1\}$ and 
$S\times \{1\}$ are isometric to $m$.
\end{thm}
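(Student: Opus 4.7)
The plan is to prove Theorem~\ref{thm:f} by a variational method, in the spirit of the proof of Theorem~\ref{thm:main1poly} for cusps. Since the target manifold $S\times[-1,1]$ is Fuchsian, it carries a reflection symmetry across the central totally geodesic surface $S\times\{0\}$, so it is enough to produce the upper boundary component $S\times\{1\}$. Lifting to the universal cover, this amounts to constructing a convex polyhedral surface $\Sigma\subset\H^3$ lying on one side of a totally geodesic plane $\H^2\subset\H^3$, invariant under a Fuchsian representation $\rho:\pi_1(S)\to\operatorname{Isom}^+(\H^2)\hookrightarrow\operatorname{Isom}^+(\H^3)$, whose quotient $\Sigma/\rho$ is isometric to $(S,m)$; the lower boundary is then obtained by reflection.

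First I fix a topological triangulation $T$ of $S$ with vertex set equal to the cone points $p_1,\ldots,p_n$ of $m$. The configuration space of pairs $(\rho,\Sigma)$ with $\Sigma$ of combinatorial type $T$ is a finite-dimensional manifold, parametrized by $\rho$ together with the equivariant positions of the $n$ vertex orbits in $\H^3$. On it I introduce a discrete Hilbert--Einstein type functional $\mathcal{S}$, built from hyperbolic volumes, edge lengths, dihedral angles and heights $h_v$ of vertices above $\H^2$, arranged so that a Schl\"afli-type computation yields
\[
\frac{\partial\mathcal{S}}{\partial h_v}=\omega_v(\Sigma)-\omega_v(m),
\]
where $\omega_v(\Sigma)$ is the actual induced singular curvature at $p_v$ and $\omega_v(m)$ is the prescribed target. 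Hence critical points of $\mathcal{S}$ are exactly those Fuchsian polyhedra whose induced singular curvatures match those of $m$.

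To upgrade ``matching cone angles'' to ``matching the full intrinsic metric'', I would follow Alexandrov's continuity scheme. Strict concavity of $\mathcal{S}$ on each combinatorial stratum (proved via a Hessian computation analogous to the one in \cite{FI}) yields local injectivity of the map $\Psi:(\rho,\Sigma)\mapsto(\text{induced cone metric})$. A dimension count matches source and target moduli spaces, and properness of $\mathcal{S}$, combined with continuous extension across the walls where $T$ undergoes a Delaunay-like flip, promotes $\Psi$ to a proper local diffeomorphism. Connectedness of the target moduli space of hyperbolic cone metrics of positive cone curvature then forces $\Psi$ to be a bijection, which produces the desired polyhedron realizing $m$.

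The main technical obstacle is establishing properness of $\mathcal{S}$: one must rule out degenerations in which two cone points coalesce, an edge collapses, a face degenerates, or the Fuchsian representation pinches along a simple closed curve. Positivity of the $\omega_v(m)$, combined with the Gauss--Bonnet identity $\sum_v\omega_v(m)=2\pi\chi(S)+\area(S,m)$, bounds $\area(S,m)$ both above and below (the upper bound uses $\omega_v(m)<2\pi$), and this should provide the needed a priori estimates. A secondary difficulty, that the triangulation $T$ is not known in advance, is handled by showing that $\mathcal{S}$ is globally defined and smooth across all flip strata, so all combinatorial types assemble into a single global variational problem.
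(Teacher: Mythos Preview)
This theorem is not proved in the present paper; it is quoted from \cite{f}, where it is established by Alexandrov's deformation method (invariance of domain plus properness), not by a variational argument.

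Your sketch conflates two distinct strategies and, as written, has a genuine conceptual gap. In the discrete Hilbert--Einstein approach of \cite{ivan1,FI}, the metric $m$ is \emph{built into the configuration space from the start}: one fixes a geodesic triangulation of $(S,m)$, and for each choice of heights $h_v$ one assembles a ``generalized Fuchsian polytope'' out of hyperbolic prisms over $\H^2$ whose outer faces are \emph{isometric copies} of the triangles of $m$. Thus the induced boundary metric equals $m$ at every point of the configuration space, not only at critical points. What varies is the total dihedral angle $\kappa_v$ around the \emph{radial} edge joining $\H^2$ to vertex $v$, and Schl\"afli gives $\partial\mathcal{S}/\partial h_v=\kappa_v$; criticality $\kappa_v=0$ means the prisms fit into an actual convex polyhedron in $\H^3$. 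Your formula $\partial\mathcal{S}/\partial h_v=\omega_v(\Sigma)-\omega_v(m)$, obtained by letting the vertex positions (and hence the induced metric) vary freely, detects the wrong defect: matching the cone angles $\omega_v$ is far weaker than matching $m$, since inequivalent hyperbolic cone metrics can share the same angle data, so there is nothing for a subsequent ``continuity upgrade'' to act on.

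If instead you intend the genuine Alexandrov continuity method of \cite{f}, then no functional $\mathcal{S}$ enters: one proves directly that the induced-metric map $\Psi$ is a local homeomorphism (dimension count plus infinitesimal rigidity) and that it is proper. Invoking concavity of $\mathcal{S}$ to obtain local injectivity of $\Psi$ is a non sequitur: concavity gives uniqueness of the critical height vector for a \emph{fixed} target $m$, not injectivity of $\Psi$ across different metrics. You should pick one of the two frameworks and carry it through consistently.
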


The smooth analogue was known for a long:

\begin{thm}[{\cite{gro}}]\label{thm:gromov}
Let $g$ be a smooth Riemannian metric with  curvature $>-1$ on a compact surface $S$ of 
genus $>1$. Then there exists a Fuchsian hyperbolic manifold $S\times[-1,1]$ with smooth convex boundary such that 
the induced metric on the boundary components is isometric to $g$.
\end{thm}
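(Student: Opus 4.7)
The plan is a continuity method on the space $\mathcal{F}$ of symmetric Fuchsian hyperbolic structures on $S\times[-1,1]$ with smooth strictly convex boundary (symmetric under the central reflection that exchanges the two ends). Let $\mathcal{R}^{>-1}(S)$ denote the open set of smooth Riemannian metrics on $S$ with sectional curvature $>-1$, and let $\Phi\colon\mathcal{F}\to\mathcal{R}^{>-1}(S)$ assign to $M\in\mathcal{F}$ the induced metric on $\partial M$ (the two components carry the same metric by symmetry). Since $\mathcal{R}^{>-1}(S)$ is path-connected in its Fr\'echet topology, it suffices to show that $\Phi(\mathcal{F})$ is nonempty, open, and closed.

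Nonemptiness is immediate from equidistant surfaces: fix any hyperbolic structure on $S$ and consider the associated Fuchsian manifold $S\times\R$; for every $\epsilon>0$, the equidistant surfaces at distance $\pm\epsilon$ from the central totally geodesic surface are smooth and strictly convex, with constant intrinsic curvature $-1/\cosh^2(\epsilon)\in(-1,0)$. For openness I would linearize $\Phi$ at an element $M\in\mathcal{F}$. Infinitesimal deformations of $M$ in $\mathcal{F}$ are parametrized by equivariant normal variations of the embedding $\partial M\hookrightarrow\H^3$ together with infinitesimal deformations of the Fuchsian group. The induced first variation of the metric is a first-order linear differential operator; it is elliptic, and positive-definiteness of the shape operator of $\partial M$ forces trivial cokernel via an equivariant version of Weyl's infinitesimal rigidity argument for convex surfaces in $\H^3$, so $d\Phi_M$ is surjective.

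Closedness is the principal obstacle. Let $g_n=\Phi(M_n)$ converge to $g_\infty$ in $C^\infty$; I would show that the $M_n$ subconverge in $\mathcal{F}$. The Gauss equation gives $\det B_n = K_{g_n}+1$, uniformly bounded away from $0$ and $\infty$, so Theorem~\ref{thm:pogo} applied equivariantly furnishes uniform $C^{k-1}$ bounds on the embeddings $\partial M_n\hookrightarrow\H^3$. Comparison with equidistant surfaces controls the width $w_n$ of $M_n$ from above and below: $w_n\to\infty$ would inflate the intrinsic metric of $\partial M_n$ without bound, while $w_n\to 0$ would push the intrinsic curvature of $\partial M_n$ to $-1$, each contradicting smooth convergence of $g_n$ in $\mathcal{R}^{>-1}(S)$. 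Bounded width together with the uniform boundary estimates then controls the conformal class of the central totally geodesic surface in Teichm\"uller space, so a subsequence of the $M_n$ converges smoothly to some $M_\infty\in\mathcal{F}$ with $\Phi(M_\infty)=g_\infty$. The hardest part is precisely this closedness step, where only boundary data are available to rule out every possible degeneration of the underlying Fuchsian structure and to produce a smooth limit of the equivariant embeddings.
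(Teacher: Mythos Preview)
The paper does not prove Theorem~\ref{thm:gromov}; it is quoted from \cite{gro} as a result already in the literature (introduced with ``The smooth analogue was known for a long''), so there is no proof in the paper to compare your proposal against.

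As for the proposal itself, the overall continuity strategy is the standard one for this kind of statement, and the nonemptiness step is correct. Two steps, however, contain real gaps. In the openness argument you claim that infinitesimal rigidity ``forces trivial cokernel'' of $d\Phi_M$. Infinitesimal rigidity of a convex surface is the statement that $d\Phi_M$ has trivial \emph{kernel} (modulo ambient isometries and deformations of the Fuchsian group), not trivial cokernel; to deduce surjectivity you must in addition show that the linearization is Fredholm of index zero, which is a separate computation (ellipticity alone is not enough --- one has to identify the principal symbol and match the dimensions, or set things up in a self-adjoint framework). In the closedness argument you invoke Theorem~\ref{thm:pogo} to obtain \emph{uniform} $C^{k-1}$ bounds on the embeddings $\partial M_n\hookrightarrow\H^3$. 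Pogorelov's theorem is a regularity statement for a single surface: it upgrades convexity plus a $C^k$ intrinsic metric to $C^{k-1}$ smoothness of the embedding, but it does not provide estimates that are uniform along a sequence of metrics. Producing such a priori bounds is exactly the hard analytic content of the closedness step and requires either a quantitative version of Pogorelov's interior estimates or independent principal-curvature bounds; without them the subconvergence of $(M_n)$ is unsupported. Your remarks on controlling the width $w_n$ and the Teichm\"uller parameter are reasonable heuristics but would also need to be made precise.
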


Using Theorem~\ref{thm:gromov} instead of Theorem~\ref{thm:lab}, the proof of Theorem~\ref{thm:dima} leads to the following.

\begin{thm}\label{thm:fuchs general}
Let $m$ be a CBB($-1$)  metric on a compact surface $S$ of 
genus $>1$. Then there exists a Fuchsian hyperbolic manifold $S\times[-1,1]$ with convex boundary such that 
the induced metric on the boundary components is isometric to $m$.
\end{thm}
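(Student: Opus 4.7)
Following the author's remark preceding the statement, the plan is to mimic the smooth approximation proof of Theorem~\ref{thm:dima}, with Theorem~\ref{thm:gromov} playing the role of Theorem~\ref{thm:lab}. Thus one approximates the CBB($-1$) metric $m$ by smooth metrics of strictly larger curvature, hyperbolizes each of them as a Fuchsian manifold via Theorem~\ref{thm:gromov}, and then extracts a geometric limit.

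First, I would approximate $m$ by a sequence of smooth Riemannian metrics $g_n$ on $S$ with sectional curvature $>-1$, converging to $m$ uniformly as distance functions. For a CBB($-1$) metric on a surface, such an approximation can be obtained, for instance, via a short-time smoothing flow followed by a small conformal adjustment that forces the strict bound $K_{g_n}>-1$. This is the two-dimensional version of the smooth approximation step used in \cite{slu}. Second, for each $n$ apply Theorem~\ref{thm:gromov} to $g_n$ to obtain a Fuchsian hyperbolic manifold $M_n\cong S\times[-1,1]$ with smooth strictly convex boundary, whose two boundary components both carry $g_n$.

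The third step is to pass to the limit. Lifting to the universal cover, one boundary component of $M_n$ becomes a $\Gamma_n$-invariant strictly convex surface $\Sigma_n\subset\H^3$ on one side of the totally geodesic plane of symmetry $P_n$, where $\Gamma_n$ is a cocompact Fuchsian group acting on $P_n$ with $P_n/\Gamma_n\cong S$. Uniform diameter bounds on $(S,g_n)$ coming from the convergence $g_n\to m$, together with the fact that the projection $\Sigma_n\to P_n$ is equivariant and proper, give uniform bounds on $\Sigma_n$ and on $\Gamma_n$ up to conjugation in $\mathrm{Isom}(\H^3)$. Extracting subsequences yields a Fuchsian limit group $\Gamma$ and a $\Gamma$-invariant convex surface $\Sigma\subset\H^3$, and the two-sided symmetric configuration passes to the limit, producing a candidate Fuchsian manifold $S\times[-1,1]$ with convex boundary.

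The main obstacle — the analogue of the core argument in \cite{slu} and of Section~\ref{sec:conv met} in the present paper — is to show that the intrinsic induced metric on $\Sigma/\Gamma$ is \emph{isometric} to $m$, not merely close to it. The inequality $d_{\Sigma/\Gamma}\leq d_m$ is essentially automatic from uniform Hausdorff convergence of the $\Sigma_n/\Gamma_n$ to $\Sigma/\Gamma$ combined with uniform convergence of the lengths of shortest curves in $(S,g_n)$. The reverse inequality is the delicate step: it relies on the stability of the CBB($-1$) condition under Gromov--Hausdorff limits, which prevents distances from collapsing, together with the convergence $(S,g_n)\to(S,m)$. Once this identification is achieved, the two boundary components of the limit Fuchsian manifold are both isometric to $(S,m)$, which proves the theorem.
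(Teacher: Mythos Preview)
Your proposal is correct and follows exactly the approach indicated in the paper: the paper gives no detailed proof of this theorem, only the one-sentence remark that ``Using Theorem~\ref{thm:gromov} instead of Theorem~\ref{thm:lab}, the proof of Theorem~\ref{thm:dima} leads to the following,'' and your sketch is a faithful unpacking of that remark, outlining the smooth approximation step, the Fuchsian realization via Theorem~\ref{thm:gromov}, the compactness/limit argument, and the identification of the limiting induced metric with $m$ as in \cite{slu}.
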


Let us put all these statements together.
Cutting in a suitable way the hyperbolic manifolds given in theorems \ref{thm:ale1}, \ref{thm:alexlisse}, \ref{thm:alexpoly},
\ref{thm:main},
\ref{thm:cusp lisse}, \ref{thm:main1poly}, \ref{thm:fuchs general}, \ref{thm:gromov}, \ref{thm:f},  we obtain the following result.

\begin{thm}\label{thm:general hyperbolic}
Let $m$ be a CBB($-1$) metric on a compact surface $S$. Then the manifold $S\times [-1,1]$ admits a hyperbolic metric, such that $S\times \{-1\}$ is convex and isometric to $m$ for the induced inner metric, and 
$S\times \{1\}$ has constant curvature.

Moreover
\begin{itemize}
\item if $m$ is a hyperbolic metric with conical singularities of positive curvature, 
then $S\times \{-1\}$ is polyhedral,
\item if $m$ comes from a smooth Riemannian metric with curvature $>-1$, then 
$S\times \{-1\}$  is smooth and strictly convex.
\end{itemize}
\end{thm}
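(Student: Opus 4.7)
The plan is to split by the genus of $S$, invoke the appropriate hyperbolization theorem from the list above, and then cut the resulting hyperbolic manifold along a surface of constant curvature so as to expose a second boundary component. The theorem is essentially a repackaging of the already-cited results.

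First, if $S$ is a sphere, Theorem~\ref{thm:ale2} yields a hyperbolic ball $M$ with convex boundary isometric to $m$. Pick any interior point $p\in M$ and $r>0$ small enough that the closed geodesic ball $\overline{B(p,r)}$ lies in $\operatorname{int} M$. Then $M\setminus B(p,r)$ is homeomorphic to $\Sph^2\times[-1,1]$, with $\Sph^2\times\{-1\}=\partial M$ convex and isometric to $m$, and $\Sph^2\times\{1\}$ a geodesic sphere of $\H^3$, which has constant positive curvature.

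Second, if $S$ is a torus, Theorem~\ref{thm:main} yields a hyperbolic cusp $\mathrm{C}$ homeomorphic to $T\times[0,+\infty[$ with $\partial\mathrm{C}$ convex and isometric to $m$. For $t>0$ large enough the horospherical torus $T_t:=T\times\{t\}$ is disjoint from $\partial\mathrm{C}$; cutting $\mathrm{C}$ along $T_t$ produces a compact manifold homeomorphic to $T\times[-1,1]$ whose one boundary $\partial\mathrm{C}$ is convex and isometric to $m$ while the other, $T_t$, is a flat torus. Third, if $S$ has genus at least $2$, Theorem~\ref{thm:fuchs general} yields a Fuchsian hyperbolic manifold $N\cong S\times[-1,1]$ whose two boundary components are both convex and isometric to $m$. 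By definition of Fuchsian the convex core $\Sigma\subset N$ is a single totally geodesic closed surface, and $N\setminus\Sigma$ has two connected components, each homeomorphic to $S\times[-1,1]$ and each having one convex boundary isometric to $m$ and the other equal to $\Sigma$, of constant curvature $-1$.

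The polyhedral and smooth refinements are obtained in the same way, by replacing the three hyperbolizations with their polyhedral (Theorems~\ref{thm:alexpoly}, \ref{thm:main1poly}, \ref{thm:f}) or smooth (Theorems~\ref{thm:alexlisse}, \ref{thm:cusp lisse}, \ref{thm:gromov}) variants; none of the cuts described above touches the boundary component isometric to $m$, so its polyhedral or smooth nature is preserved. The only non-trivial content of the statement is the existence of the three hyperbolizations themselves, and in particular the torus case, which is the main theorem of the present paper --- the one step that is genuinely hard; once it is available, the passage to the product form $S\times[-1,1]$ with a constant-curvature second boundary is purely topological.
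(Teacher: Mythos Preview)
Your proposal is correct and follows exactly the approach of the paper, which proves this theorem in a single sentence: ``Cutting in a suitable way the hyperbolic manifolds given in theorems \ref{thm:ale1}, \ref{thm:alexlisse}, \ref{thm:alexpoly}, \ref{thm:main}, \ref{thm:cusp lisse}, \ref{thm:main1poly}, \ref{thm:fuchs general}, \ref{thm:gromov}, \ref{thm:f}, we obtain the following result.'' You have simply made explicit what those suitable cuts are in each of the three genus cases (a small geodesic sphere, a horospherical torus, the totally geodesic core), which is a helpful elaboration of the paper's one-line argument.
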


Note that in the case of genus $>1$, we have chosen the Fuchsian solution, but the quasi-Fuchsian Theorem~\ref{thm:dima} gives many choices for the realization of the prescribed metric. 
Actually all the cases in Theorem~\ref{thm:general hyperbolic}
share the same property: the holonomy of their developing map
fixes a point (the point may not be in the hyperboic space, see for example the beginning of Section~\ref{sec:proof}).

In the case 
of CBB($-1$) metrics on the torus, we could also consider a hyperbolic metric with convex boundary on a full torus.
In this direction, only the smooth case is known.

\begin{thm}[{\cite{sch}}]
Let $g$ be a smooth Riemannian metric of curvature $>-1$ on the torus $T$.
Then there exists a (unique) hyperbolic metric on the full torus such that 
the metric on the boundary is smooth, strictly convex and isometric to $g$.
\end{thm}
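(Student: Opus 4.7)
The plan is a continuity-method argument. Let $\mathcal{M}$ denote the moduli space of hyperbolic metrics on the solid torus $V$ with smooth strictly convex boundary, and let $\mathcal{N}$ denote the moduli space of smooth Riemannian metrics on $\partial V = T$ of sectional curvature $>-1$. By the Gauss equation, any smooth strictly convex surface in a hyperbolic $3$-manifold has intrinsic curvature $>-1$, so the boundary-metric map $\Phi\colon\mathcal{M}\to\mathcal{N}$ is well defined and continuous. I would try to prove that $\Phi$ is a diffeomorphism, yielding existence and uniqueness simultaneously.

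First, $\mathcal{M}$ is non-empty: any loxodromic $\gamma\in\operatorname{Isom}^+(\H^3)$ generates a discrete cyclic group, and the quotient of a small equidistant tube around $\operatorname{axis}(\gamma)$ is a hyperbolic solid torus whose boundary is smooth and strictly convex. For openness I would show that $d\Phi$ is an isomorphism at every point. Passing to the universal cover and working equivariantly with respect to the cyclic holonomy group, injectivity of $d\Phi$ reduces to infinitesimal rigidity of cocompactly $\mathbb{Z}$-invariant smooth strictly convex surfaces in $\H^3$, obtained by the classical Pogorelov--Blaschke rigidity applied equivariantly; surjectivity then follows from a Fredholm index computation of the corresponding linearized problem on the quotient torus, which has index zero.

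The main step, and the one I expect to be the hardest, is properness. Assume $g_n = \Phi(h_n) \to g_\infty$ in $\mathcal{N}$; I must extract a subsequence of $(h_n)$ converging in $\mathcal{M}$. The a priori degenerations to exclude are: the complex length of the core geodesic escaping to $0$ or $\infty$; the boundary tube escaping to infinity along the equidistant direction; and the boundary collapsing onto the core geodesic. The second is ruled out by the uniform area bound coming from $g_n \to g_\infty$. The third is ruled out because such a collapse would force the curvature of $g_n$ to blow up positively somewhere while also producing arbitrarily short closed curves, incompatible with a smooth limit with curvature $>-1$ in the fixed conformal class of $g_\infty$. Degenerations of the core's complex length are then controlled by the systole and diameter of $g_n$ together with convexity, using standard tube-geometry estimates in $\H^3$. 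With these a priori bounds and a two-sided bound on the second fundamental form obtained from the Gauss equation and the uniform curvature bound, compactness of smooth convex surfaces gives a limit $h_\infty \in \mathcal{M}$.

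Since $\mathcal{N}$ is connected and $\Phi$ is continuous, open, and proper with non-empty image, $\Phi$ is surjective, which is the existence statement. For uniqueness, local injectivity of $\Phi$ combined with the fact that two convex solid-torus fillings of a given convex boundary are isotopic through convex fillings promotes local to global injectivity. The genuinely delicate point is the quantitative exclusion of tube collapse in the properness step: this is precisely where the strict inequality $K>-1$ is essential, in contrast to Theorem~\ref{thm:main} where $K \geq -1$ is allowed at singular points.
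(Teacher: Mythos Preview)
The paper does not give its own proof of this theorem: it is simply quoted from \cite{sch} (Schlenker) as part of the survey in Section~\ref{1.2}, with no argument supplied. So there is no proof in the paper to compare your attempt against.

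Your outline is recognizably the continuity-method strategy that Schlenker in fact uses, and the broad architecture---non-emptiness, local inversion via infinitesimal rigidity, properness, connectedness of the target---is the right one. That said, as written several steps are assertions rather than arguments. The infinitesimal rigidity of $\Z$-equivariant convex surfaces does not reduce ``classically'' to Pogorelov--Blaschke, which concerns closed convex surfaces in simply connected space forms; the equivariant/noncompact version requires separate work. The Fredholm index-zero claim is plausible but unsupported. In the properness step, excluding degeneration of the core geodesic is the genuine content, and ``standard tube-geometry estimates'' does not supply it; nor do you justify the two-sided bound on the second fundamental form, which is not an automatic consequence of the Gauss equation alone (the Gauss equation controls the product of principal curvatures, not each separately). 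Finally, your uniqueness step---promoting local to global injectivity via an isotopy through convex fillings---presupposes exactly the global structure on $\mathcal{M}$ that has not been established.

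So: correct strategy, but what you have is a plan rather than a proof. For the purposes of this paper the point is moot, since the result is simply cited.
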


Another question is to realize CBB($-1$) metrics on compact surfaces of genus $>1$ as the convex boundary of 
more general compact hyperbolic manifold, analogously to Theorem~\ref{thm:lab}.

We cited Theorem~\ref{thm:cbb0} about realization of CBB($0$) metrics on the sphere in the Euclidean space.
There is also an analogue result about  realization of CBB($1$) metrics on the sphere in the $3$ dimensional sphere \cite{al}, as well as the polyhedral and smooth counterparts. Theorem~\ref{thm:general hyperbolic} gives all the possibilities for 
a CBB($-1$) metric on a compact surface of genus $>1$. Moreover, it is obvious that any flat metric (i.e. a metric of curvature $0$ everywhere) on a torus $T$ can be extended to a flat metric on $T\times [-1,1]$.
Lemma~\ref{lem:GB} says that we have exhausted all the possibilities. Theorem~\ref{thm:general cbb} follows.

A question is to know if the constant curvature metric on $S\times [-1,1]$ is unique.
Due to the work of Pogorelov, the answer is positive if $S$ is the sphere \cite{pog}.
As we already mentioned, in the torus case this is work in progress. As the only unsolved case there would remain that of Fuchsian hyperbolic manifolds with convex boundary.

\subsection{Smooth variational approach?}

As we said, Theorem~\ref{thm:cbb0} was proved by polyhedral approximation. It is based on the following seminal theorem, proved in the 1940's.

\begin{thm}[{\cite{al}}]\label{thm:al poly eucl}
Let $m$ be a flat metric with conical singularities of positive curvature on the sphere.
Then there exists a  convex polyhedron in Euclidean space with inner induced metric $m$ on the boundary.
\end{thm}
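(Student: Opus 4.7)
The plan is to use the Alexandrov continuity (invariance of domain) method, which has become the template for all the realization theorems cited earlier in this paper. Fix the number $n\ge 3$ of cone points. Let $\mathcal{M}_n$ denote the moduli space of flat cone metrics on $\Sph^2$ with exactly $n$ singularities of positive curvature (summing to $4\pi$ by Gauss--Bonnet), and let $\mathcal{P}_n$ denote the moduli space of convex polyhedra in $\R^3$ with exactly $n$ vertices, modulo rigid motions. One must allow the ``degenerate'' elements of $\mathcal{P}_n$ consisting of doubly covered convex polygons with $n$ marked boundary points, as otherwise metrics whose cone points happen to lie on a geodesic disk cannot be attained. A parameter count using Euler's formula shows that $\mathcal{M}_n$ and $\mathcal{P}_n$ are topological manifolds of the same dimension.

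Consider the natural map $\Phi:\mathcal{P}_n\to\mathcal{M}_n$ sending a polyhedron $P$ to the intrinsic metric on $\partial P$. The induced metric is flat on each face, with cone angle at each vertex equal to the sum of the face angles meeting there; local convexity forces this sum to be strictly less than $2\pi$, so the singular curvature is positive. I would then verify, in order: (a) $\Phi$ is continuous, which is straightforward from the explicit formula for intrinsic distances on a polyhedral surface; (b) $\Phi$ is proper, i.e.\ if a sequence of induced metrics converges then the polyhedra subconverge in Hausdorff distance to a non-degenerate convex set: the diameters of $\partial P_k$ stay bounded by the diameters of the limit metric, so Blaschke selection applies after translation, and an area lower bound from $m$ excludes collapse onto a segment or a point.

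The most delicate step (c) is the injectivity of $\Phi$: two convex polyhedra with isometric boundaries must coincide up to rigid motion. This is Alexandrov's uniqueness theorem, obtained by adapting Cauchy's arm lemma and a sign-change count along the edge graph, once one shows that an isometry of the two boundaries induces a combinatorial correspondence of faces. This is the main technical obstacle and has no counterpart in the smooth theory. Together with the equal dimension of $\mathcal{M}_n$ and $\mathcal{P}_n$, continuity plus injectivity yields via Brouwer's invariance of domain that $\Phi$ is an open embedding; together with properness, its image $\Phi(\mathcal{P}_n)$ is both open and closed in $\mathcal{M}_n$.

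It remains to check (d) that $\mathcal{M}_n$ is connected and that $\Phi(\mathcal{P}_n)$ is nonempty. Connectedness can be obtained by a direct deformation: fixing a geodesic triangulation, one can continuously interpolate edge lengths between any two cone metrics while preserving positivity of the curvatures, moving through $\mathcal{M}_n$ and possibly through strata with fewer cone points (which are handled by an induction on $n$). Nonemptiness is trivial since explicit convex polyhedra with $n$ vertices exist. Openness, closedness, and nonemptiness of $\Phi(\mathcal{P}_n)$ inside the connected space $\mathcal{M}_n$ then force $\Phi(\mathcal{P}_n)=\mathcal{M}_n$, proving the theorem.
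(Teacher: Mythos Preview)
Your proposal is correct and matches the approach the paper itself attributes to Alexandrov: the paper does not give a detailed proof of this classical result, but explicitly states that ``the proof of Theorem~\ref{thm:al poly eucl} is done by a continuity method, based on topological arguments, in particular the Domain Invariance Theorem,'' which is precisely the strategy you outline. Your sketch (equal-dimensional moduli spaces, continuity, properness via Blaschke selection, injectivity via Cauchy--Alexandrov rigidity, and connectedness) is the standard execution of that method.
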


The proof of Theorem~\ref{thm:al poly eucl} is done by a continuity method, based on topological arguments,
in particular the Domain Invariance Theorem. Some years ago, a variational proof of  Theorem~\ref{thm:al poly eucl} was given in \cite{ivan1}. The functional is a discrete Hilbert--Einstein functional. It was then used in \cite{ivan2}, and later in \cite{FI} to prove Theorem~\ref{thm:main1poly}. A long-standing question is 
to use the smooth Hilbert--Einstein functional to give a variational proof of the smooth version of 
Theorem~\ref{thm:al poly eucl} (known as Weyl problem) \cite{BH,ivan3}. It would be interesting to give a variational proof of Theorem~\ref{thm:cusp lisse} as well.
There are reasons to think that the functional will have good properties 
in the case of a hyperbolic cusp.

\subsection{Acknowledgement}

The authors thank Stephanie Alexander, Thomas Richard, Jo{\"e}l Rouyer, Dima Slutskiy for useful conversations. 

The first author was partially supported by the  ANR GR-Analysis-Geometry and by the mathematic department of the Universidade Federal do Rio de Janeiro and this research has been conducted as part of the project Labex MME-DII (ANR11-LBX-0023-01). A part of this work was done during his stay at the Universidade Federal do Rio de Janeiro. He thanks this institution for its hospitality.

The second author was supported by by the European Research Council under the European Union's Seventh Framework Programme (FP7/2007-2013)/\allowbreak ERC Grant agreement no.~247029-SDModels.

The third author was partially supported by the
Gruppo
Nazionale per l'Analisi Matematica, la Probabilit\`a e le loro Applicazioni
(GNAMPA).

\section{Background}

\subsection{CBB metrics on compact surfaces}\label{sec:cbb}

We follow \cite{BBI} for basic definitions and results about metric geometry.
See also \cite{BH1999} and \cite{alexbook}.
Let $m$ be a metric on a compact surface $S$ (by this we imply that the topology
given by $m$ is the topology of $S$). We suppose that $m$ is intrinsic, that is for any $x,y\in S$, $m(x,y)$ is equal to the infimum of the length of the continuous curves between $x$ and $y$. By the Hopf--Rinow theorem, there always exists a shortest path between $x$ and $y$.

The metric $m$ is CBB($k$) if every point has a neighbourhood $U$ such that  any triangle contained in $U$ is thicker than the comparison triangle in the model space of constant curvature $k$ (see the references above for precise and equivalent definitions). By the Toponogov globalization theorem, this property is actually true for any triangle in $(S,m)$.

A shortest path between two points in a CBB($k$) space may not be unique, as show the example of a disc of curvature $k$ with a sector of angle $0<\alpha <2\pi$ removed and the two resulting sides identified. But shortest paths in CBB($k$) do not branch.

Let $(S,m)$ be a polyhedral CBB($k$) metric, that is a metric of constant curvature $k$ with singular curvatures  $k_i$ (the $k_i$ have to be positive \cite[10.9.5]{BBI}).
 It has to satisfy the Gauss--Bonnet formula
$$2\pi\chi(S) = k \operatorname{area}(S) +\sum k_i $$
i.e.
$$2\pi\chi(S) \geq k  \operatorname{area}(S)  $$
with equality if and only if $m$ is a smooth constant curvature metric (no conical singularities).

Now let $(S,m)$ be any CBB($k$) metric. By a theorem of Alexandrov and Zalgaller, $(S,m)$ can be decomposed into non-overlapping geodesic triangles. Replacing each triangle by a comparison triangle in the space of constant curvature $k$, we obtain a polyhedral CBB($k$) metric on $S$ see  \cite{richard,rou} for details. In particular, we obtain the following.

\begin{lem}\label{lem:GB}
A compact surface $S$ can be endowed with a CBB($k$) metric if and only if
\begin{itemize}
\item if $S$ is a sphere, $k\in \R$,
\item if $S$ is a torus, $k=0$ and the metric is a flat Riemannian metric or $k<0$,
\item otherwise, $k<0$.
\end{itemize}
\end{lem}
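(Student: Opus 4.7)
The plan is to prove the biconditional by checking both directions.

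For the ``if'' direction I would exhibit in each allowed case a CBB($k$) metric. The round sphere of radius $r$ in $\R^3$ has constant Gaussian curvature $1/r^2$; choosing $r$ small gives a smooth Riemannian metric with $\sec\ge k$ for any $k\in\R$, hence a CBB($k$) metric on $S^2$. A flat torus $\R^2/\Lambda$ handles the torus case $k=0$. For the torus with $k<0$ one can take any hyperbolic metric with a positive cone singularity, for instance the one obtained from a sufficiently large regular hyperbolic quadrilateral with opposite sides identified (Gauss--Bonnet forces the single cone curvature to be positive). For higher genus with $k<0$, any hyperbolic structure on $S$, rescaled so that the curvature equals $k$, works.

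For the ``only if'' direction the central ingredient is the Alexandrov--Zalgaller polyhedral approximation recalled just before the lemma. Given $(S,m)$ CBB($k$), one triangulates $S$ and replaces each triangle by its comparison triangle in the model space of constant curvature $k$, producing a polyhedral CBB($k$) metric on $S$. Applying the Gauss--Bonnet formula stated in the text yields
\[ 2\pi\chi(S) \;=\; k\cdot\operatorname{area}(S) + \sum_i k_i \;\ge\; k\cdot\operatorname{area}(S), \]
since each $k_i>0$. From this single inequality I would read off the topological restrictions case by case: if $S$ has genus $\ge 2$, then $\chi(S)<0$ forces $k<0$; if $S$ is a torus, then $\chi(S)=0$ forces $k\le 0$, ruling out $k>0$; and there is no obstruction in the case of the sphere since $\chi(S^2)=2>0$.

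The remaining borderline case and main obstacle is showing that a CBB($0$) metric on the torus must actually be a smooth flat Riemannian metric, not merely approximable by one. Equality in the Gauss--Bonnet formula forces $\sum_i k_i=0$ in \emph{every} polyhedral approximation, so each approximation is cone-point-free, i.e.\ a smooth flat Riemannian metric on $T$. To conclude that the original $m$ is already flat Riemannian, the cleanest route is via the Alexandrov curvature measure $\omega$ on a CBB surface: the CBB($0$) hypothesis means $\omega\ge 0$, and the Gauss--Bonnet identity on the torus gives $\omega(T)=2\pi\chi(T)=0$, hence $\omega\equiv 0$; by the classical Alexandrov characterization of surfaces of bounded curvature, $\omega\equiv 0$ on a closed surface forces $m$ to be a smooth flat Riemannian metric. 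This last identification is the technical heart of the lemma, and once it is in place all three cases of the statement follow.
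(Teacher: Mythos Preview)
Your proposal is correct and follows essentially the same approach as the paper: both derive the obstruction from the Gauss--Bonnet inequality $2\pi\chi(S)\ge k\,\operatorname{area}(S)$ applied to the polyhedral CBB($k$) metric produced by the Alexandrov--Zalgaller triangulation, and the ``if'' direction is handled by exhibiting standard model metrics. The paper's treatment of the borderline torus/$k=0$ case is terse---it only records that equality in Gauss--Bonnet forces the \emph{polyhedral} metric to be cone-point free---whereas you supply the missing rigidity step for the original metric $m$ via the curvature measure $\omega$; this is a welcome clarification rather than a genuinely different route.
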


Actually, Alexandrov and Zalgaller proved much more, but in a different context.  Roughly speaking, the triangulation of
the CBB($-1$)  
can be chosen as fine as wanted. If the perimeter of the triangles goes to $0$, then the sequence of polyhedral metrics obtained by replacing the triangles by comparison triangles converge to $(S,m)$ in the Gromov--Haussdorff sense \cite{richard,rou}.

\begin{thm}\label{thm: aprox pol}
Let $m$ be a CBB($-1$) metric on a compact surface.
Then there exists polyhedral CBB($-1$) sequence of metric $m_n$ on the torus 
Gromov Hausdorff converging to $m$.
\end{thm}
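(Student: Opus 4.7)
The plan is to carry out in full the construction sketched in the paragraph preceding the statement: triangulate $(S,m)$ by finer and finer geodesic triangulations, and replace each triangle by its comparison triangle in the hyperbolic plane. The three things to verify are (i) that such triangulations exist with arbitrarily small mesh, (ii) that the resulting metric $m_n$ is genuinely polyhedral CBB($-1$), and (iii) that $m_n \to m$ in the Gromov--Hausdorff sense.

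For (i), I would invoke the Alexandrov--Zalgaller decomposition theorem, in the form recalled in \cite{richard,rou}: for every $\varepsilon_n > 0$ there is a finite partition of $(S,m)$ into non-overlapping geodesic triangles $T_i^{(n)}$, each of perimeter at most $\varepsilon_n$. For each such $T_i^{(n)}$, let $\widetilde T_i^{(n)} \subset \H^2$ be the unique hyperbolic triangle with the same side lengths. Because the side lengths agree along shared edges, the $\widetilde T_i^{(n)}$ can be glued into a metric $m_n$ on $S$ that is locally isometric to $\H^2$ away from the (finite) vertex set, hence hyperbolic with isolated cone-type singularities.

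For (ii), I would check that the singular curvature at each vertex $v$ is non-negative, which by the Gauss--Bonnet criterion in the CBB($-1$) case given just above the theorem is equivalent to $m_n$ being CBB($-1$). At each vertex, the angle at $v$ in the hyperbolic comparison triangle $\widetilde T_i^{(n)}$ is no larger than the angle at $v$ of the original triangle $T_i^{(n)}$ inside $(S,m)$: this is the angle-monotonicity form of the CBB($-1$) comparison. Summing over all triangles incident to $v$ yields
\[
\theta_{m_n}(v) \;\le\; \theta_{m}(v) \;\le\; 2\pi,
\]
the last inequality being the standard upper bound on total angles in any CBB($-1$) surface.

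For (iii), I would use the identity map $\mathrm{id}:(S,m)\to(S,m_n)$ as an $\varepsilon_n$-isometry: a shortest $m$-path between two points $x,y$ crosses a bounded combinatorial sequence of triangles, and along shared edges the two metrics agree by construction; the discrepancy on each traversed triangle is controlled by its perimeter $\le \varepsilon_n$, so $|m_n(x,y) - m(x,y)| \to 0$ uniformly. This is the convergence statement made explicit in \cite{richard,rou}. The main point of potential difficulty is verifying that this geometric intuition passes rigorously into a uniform distance estimate, independent of the combinatorics of the triangulation; but this is precisely what the Alexandrov--Zalgaller machinery is designed to deliver, and it is used in exactly this form in the references cited. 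Choosing any sequence $\varepsilon_n \to 0$ then produces the required sequence $m_n$.
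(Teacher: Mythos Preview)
Your proposal is correct and follows exactly the approach the paper outlines: the paper does not give its own proof of this theorem but refers the construction and the convergence to \cite{richard,rou}, after sketching in the preceding paragraph precisely the three steps you carry out (Alexandrov--Zalgaller triangulation, replacement by comparison triangles, GH convergence as the mesh goes to zero). Your added detail for step~(ii)---bounding the cone angle via the CBB angle comparison and the $2\pi$ bound on the total angle in a CBB surface---is the standard argument and is implicit in the paper's remark that the resulting polyhedral metric is CBB($-1$).
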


At the end of the day, in the case of the torus, it is
not hard to conclude from Theorem~\ref{thm:main} and Proposition~\ref{prop:conv metric} that the convergence can be taken uniform in Theorem~\ref{thm: aprox pol}.

Let us mention the following results about Gromov--Hausdorff convergence that we will use in the sequel.

\begin{lem}[{\cite[I.5.40]{BH1999},\cite[7.3.14]{BBI}}]\label{lem:diam conv}
A Gromov--Hausdorff convergence of metric spaces $(S,m_n)$ implies the convergence of the diameters of $(S,m_n)$.
\end{lem}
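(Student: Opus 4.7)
The plan is to prove this by unwinding the definition of Gromov--Hausdorff convergence and checking that the diameter functional, being determined by the distance between pairs of points, is stable under almost-isometries. Recall that $(S,m_n) \to (S,m)$ in the Gromov--Hausdorff sense means that there exist numbers $\varepsilon_n \to 0$ and $\varepsilon_n$-isometries $f_n\colon (S,m_n)\to (S,m)$, meaning that $|m(f_n(x),f_n(y)) - m_n(x,y)| < \varepsilon_n$ for all $x,y\in S$ and that $f_n(S)$ is $\varepsilon_n$-dense in $(S,m)$. The plan is to establish $\limsup_n \operatorname{diam}(S,m_n) \le \operatorname{diam}(S,m)$ and $\liminf_n \operatorname{diam}(S,m_n) \ge \operatorname{diam}(S,m)$ separately.

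For the first inequality, fix $\delta>0$ and, for each $n$, pick $x_n,y_n\in S$ with $m_n(x_n,y_n) \ge \operatorname{diam}(S,m_n) - \delta$. The almost-isometry property gives
\[
\operatorname{diam}(S,m) \;\ge\; m(f_n(x_n),f_n(y_n)) \;\ge\; m_n(x_n,y_n) - \varepsilon_n \;\ge\; \operatorname{diam}(S,m_n) - \delta - \varepsilon_n,
\]
and letting $n\to \infty$ and then $\delta\to 0$ yields the upper bound on $\limsup_n \operatorname{diam}(S,m_n)$.

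For the reverse inequality, fix $\delta>0$ and pick $x,y\in S$ with $m(x,y) \ge \operatorname{diam}(S,m) - \delta$. By $\varepsilon_n$-density, there exist $x_n,y_n\in S$ such that $m(f_n(x_n),x) < \varepsilon_n$ and $m(f_n(y_n),y) < \varepsilon_n$. Combining the triangle inequality with the almost-isometry estimate gives
\[
\operatorname{diam}(S,m_n) \;\ge\; m_n(x_n,y_n) \;\ge\; m(f_n(x_n),f_n(y_n)) - \varepsilon_n \;\ge\; m(x,y) - 3\varepsilon_n \;\ge\; \operatorname{diam}(S,m) - \delta - 3\varepsilon_n,
\]
which upon sending $n\to\infty$ and then $\delta\to 0$ gives the lower bound on $\liminf_n \operatorname{diam}(S,m_n)$, completing the proof.

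There is no real obstacle here; the only thing to be careful about is that in general the diameter of a metric space is a supremum that may not be attained, which is why one approximates the diameter up to $\delta$ rather than choosing points that realize it. If one prefers to use the correspondence formulation of Gromov--Hausdorff convergence instead of $\varepsilon$-isometries, the argument is identical with the two estimates above replaced by a single computation using that the distortion of the correspondence tends to zero.
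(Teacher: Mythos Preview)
Your proof is correct. Note, however, that the paper does not actually supply its own proof of this lemma: it is stated with citations to \cite[I.5.40]{BH1999} and \cite[7.3.14]{BBI} and used as a black box. Your argument via $\varepsilon$-isometries is exactly the standard one found in those references, so there is nothing to compare.
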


\begin{thm}[{\cite{BGP},\cite[10.10.11]{BBI}}]
If a sequence $(S,m_n)$ of CBB($k$) metrics on a compact surface $S$ converges in the Gromov--Hausdorff sense to a CBB($k$) metric on $S$, then the sequence of the areas of $(S,m_n)$ (the total two dimensional
 Hausdorff  measure) is bounded from below by a positive constant.
\end{thm}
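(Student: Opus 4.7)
The plan is to derive this from the Burago--Gromov--Perelman theorem on continuity of the top-dimensional Hausdorff measure under non-collapsing Gromov--Hausdorff convergence of Alexandrov spaces. The main point is to verify the non-collapsing hypothesis, after which one actually obtains the stronger conclusion $\area(S,m_n)\to\area(S,m)$, and finishes by noting that the limit is strictly positive.

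First I would observe that $(S,m)$ is a $2$-dimensional $\CBB(k)$ space: by assumption $m$ is a metric on the compact surface $S$ that generates its topology, so the Hausdorff dimension of $(S,m)$ equals exactly $2$, and the same holds for each $(S,m_n)$. Hence the sequence is non-collapsing in the sense of Burago--Gromov--Perelman, and the BGP continuity theorem applies to yield $\area(S,m_n)\to\area(S,m)$. It remains to argue that $\area(S,m)>0$: applying the Alexandrov--Zalgaller triangulation recalled just above the statement, $(S,m)$ decomposes into finitely many non-degenerate geodesic triangles whose comparison triangles in the model space of curvature $k$ have strictly positive area, which gives the desired lower bound on $\area(S,m)$. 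A convergent sequence of non-negative numbers with positive limit is eventually bounded below by a positive constant, so (after adjusting a finite number of initial terms if necessary) the whole sequence $\area(S,m_n)$ is bounded below by a positive constant.

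The really delicate ingredient is the BGP continuity theorem itself, whose general proof relies on the $\varepsilon$-strainer / $\varepsilon$-regular-point machinery for Alexandrov spaces and cannot afford to drop the non-collapsing hypothesis (thin cylinders collapsing to a segment is the archetypal counterexample). In our setting, however, this hypothesis comes for free by topological rigidity, since the sequence and the limit all live on the fixed surface $S$ and their metrics generate its topology; this is precisely where the assumption of a $\CBB(k)$ limit on the same $S$ enters crucially.
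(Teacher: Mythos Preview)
The paper does not prove this statement; it is quoted with references to \cite{BGP} and \cite[10.10.11]{BBI} and used as a black box. Your proposal is therefore not in competition with any argument in the paper but rather an unpacking of the citation, and the unpacking is correct: the key observation is that the limit metric lives on the same topological surface $S$ and hence is $2$-dimensional, so the Gromov--Hausdorff convergence is non-collapsing and the BGP continuity of the top-dimensional Hausdorff measure applies, giving $\area(S,m_n)\to\area(S,m)>0$.

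One small comment. Your route to $\area(S,m)>0$ via the Alexandrov--Zalgaller triangulation is slightly roundabout, and the implicit step ``the comparison triangles have positive area, hence the original geodesic triangles do'' is not self-evident as stated; it would need, for instance, the CBB lower volume bound for balls or a bi-Lipschitz chart near a regular point. It is cleaner to invoke directly that a compact $n$-dimensional Alexandrov space has strictly positive and finite $n$-dimensional Hausdorff measure, which is part of the same BGP theory you are already citing. With that adjustment the argument is complete.
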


\subsection{Horoconvex functions}\label{sub:horo}

We identify $\R^2$ with a given horosphere $H\subset \H^3$, with center at $\infty$ (recall the definition of the Poincar\'e half-space model of $\H^3$).
We get coordinates $(x,t)$ on $\H^3=H\times \R$, where $t$ is the signed distance from a point to $H$: it is positive if and only if the point is in the exterior of the horoball bounded by $H$.
Note that it is the length of the segment between $x$ and its orthogonal projection onto $H$, and that the line from $x$ to $\infty$ is orthogonal to $H$.

Let $u:\R^2\rightarrow\R$. The \emph{horograph} of $u$ is the subset  $(x,u(x))\in \H^3$ for those coordinates. The horograph is said to be convex if the surface
is convex in $\H^3$ in the sense that it bounds a geodesically convex set. In the Klein projective model, this corresponds to the affine notion of convexity.

\begin{rem} 
In the upper half plane model, if the horosphere $H$ is the horizontal  plane at height $1$, then the horograph of $u$ is the graph of $e^{-u}$.
\end{rem}

We have the following characterization of horographs. It was already known in the smooth case \cite{GSS}. Let us also mention that the Darboux equation related to Theorem~\ref{thm:cusp lisse} is studied in \cite{RS}.

\begin{prp}\label{prop:car u}
The horograph of $u:\R^2\rightarrow \R$ is a convex surface if and only if the function 
$$x\mapsto e^{-2u(x)}+\|x\|^2$$
is convex, with $\|\cdot\|$ the Euclidean norm.
\end{prp}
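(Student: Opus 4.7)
The plan is to pass from $\H^3$ to the linear setting of Minkowski space $\R^{3,1}$, where geodesic convexity becomes affine convexity of a radial cone, and then to identify the horograph cone with the graph of the perspective function of $F(x) := e^{-2u(x)} + \|x\|^2$. Concretely, I use the hyperboloid model $\H^3 = \{Y \in \R^{3,1} : \langle Y, Y\rangle = -1,\ Y_0 > 0\}$ of signature $(-,+,+,+)$: this realizes the Klein projective model mentioned just before the proposition, and $A \subset \H^3$ is geodesically convex iff its radial cone $\bigcup_{t > 0} tA \subset \R^{3,1}$ is affinely convex. Choose the null vector $v := (1,0,0,1)$ pointing toward $\infty$ and normalize the horosphere to $H = \{Y_0 - Y_3 = 1\}$. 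The standard isometry $(x, z) \mapsto \frac{1}{z}\bigl(\tfrac{1 + \|x\|^2 + z^2}{2},\, x_1,\, x_2,\, \tfrac{-1 + \|x\|^2 + z^2}{2}\bigr)$ from the half-space model sends the horograph $z = e^{-u(x)}$ to a surface whose radial cone, in the lightlike coordinates $\sigma := Y_0 - Y_3,\ \tau := Y_0 + Y_3$, is exactly $\{\sigma > 0,\ \tau = \sigma F(Y_1/\sigma, Y_2/\sigma)\}$. A quick check in the flat case $u \equiv 0$ identifies the cuspidal (horoball) side of the horograph with $R_+ := \{\sigma > 0,\ \tau \geq \sigma F(Y_1/\sigma, Y_2/\sigma)\} \subset \R^{3,1}$.

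Next I invoke the standard convex-analysis fact that $F$ is convex iff its \emph{perspective function} $\tilde F(\sigma, z_1, z_2) := \sigma F(z_1/\sigma, z_2/\sigma)$ is convex on $\{\sigma > 0\} \subset \R^3$. If $F$ is convex, then $\tau - \tilde F(\sigma, Y_1, Y_2)$ is concave (linear minus convex), its nonnegative superlevel set $R_+$ is a convex cone in $\R^{3,1}$, and the horograph bounds the convex set $R_+ \cap \H^3$. This gives the ``if'' direction.

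For the converse, I assume $R_+$ is convex and test convexity of $F$ on the slice $\{\sigma = 1\}$. For each $x \in \R^2$, the point $Y(x) := \bigl((F(x) + 1)/2,\, x_1,\, x_2,\, (F(x) - 1)/2\bigr)$ lies in $R_+$ with $\sigma = 1$, $(Y_1, Y_2) = x$ and $\tau = F(x)$. The affine segment from $Y(x^{(0)})$ to $Y(x^{(1)})$ in $\R^{3,1}$ has $\sigma \equiv 1$, interpolates linearly in $(Y_1, Y_2)$ and in $\tau$, and lies in $R_+$ by convexity; comparing $\tau$-coordinates produces the convexity inequality $(1 - \lambda) F(x^{(0)}) + \lambda F(x^{(1)}) \geq F((1-\lambda) x^{(0)} + \lambda x^{(1)})$. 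The only real obstacle is finding the right coordinate system: once the lightlike coordinates $(\sigma, \tau)$ adapted to the null direction $v$ are in place, the whole proposition reduces to the stability of convexity under the perspective transform.
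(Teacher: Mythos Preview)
Your argument is correct and takes a genuinely different route from the paper's. The paper stays in the upper half-space model and argues pointwise: the horograph is convex iff at each $(x_0,u(x_0))$ there is a supporting totally geodesic hemisphere, and writing out that hemisphere's equation one lands directly on the inequality
\[
e^{-2u(x)}+\|x\|^2 \;\geq\; e^{-2u(x_0)}+\|x_0\|^2 + \langle 2(x_0+G),\,x-x_0\rangle,
\]
which is exactly the supporting-hyperplane characterization of convexity for $F(x)=e^{-2u(x)}+\|x\|^2$. Your approach instead linearizes the problem by passing to the hyperboloid in $\R^{3,1}$: in the lightlike coordinates $\sigma=Y_0-Y_3$, $\tau=Y_0+Y_3$ adapted to the parabolic fixed point, the radial cone over the cuspidal side becomes the epigraph $\{\tau\geq \sigma F(Y/\sigma)\}$ of the perspective transform of $F$, and you then invoke the standard fact that $F$ is convex iff its perspective is.

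What each approach buys: the paper's computation is entirely elementary (half-space model and a single algebraic manipulation) and makes the support-plane correspondence explicit, which is convenient for the later remark identifying $G$ with $\tfrac12\operatorname{grad}e^{-2u}$. Your argument is more structural: it explains \emph{why} the particular combination $e^{-2u}+\|x\|^2$ appears (it is the $\tau$-coordinate on the $\sigma=1$ slice of the cone), and it reduces both directions to a single classical lemma on perspective functions. One small point worth making explicit in your write-up: for the converse you assume $R_+$ is convex, but the hypothesis is only that the horograph bounds \emph{some} geodesically convex set. A one-line observation (the non-cuspidal side has all of $\partial\H^3\setminus\{\infty\}$ in its ideal boundary and hence would have to contain every geodesic not asymptotic to $\infty$) rules out the other side, so the convex region must indeed be the cuspidal one whose cone is $R_+$.
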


In particular, $e^{-2u}$ is semi-convex, or lower-$C^{\infty}$, compare with
10.33 and 13.27 in \cite{RW}. We will call \emph{horoconvex} a function satisfying the
hypothesis of the proposition.

\begin{proof}
As above, consider coordinates $(x,t)_H$ on $\H^3=H\times \R$.
The horograph of $u$ is convex in $\H^3$ if and only if at each point $(x_0,u(x_0))_H$ there exists a totally geodesic surface $\Sigma$ containing the point $(x_0,u(x_0))_H$ and such that $\Sigma\subset\{(x,y)_H:y\geq u(x)\}$.
In the the Poincar\'e halfspace model let us now consider the standard Euclidean coordinates $(y,s)_E\in \R^2\times(0,\infty)$. Without loss of generality we can assume that the horosphere $H$ is the plane at height $1$ in this model. Then we have $(x,t)_H=(x,e^{-t})_E$. In this system, $\Sigma$ has to be a half-sphere with center $(c,0)_E$ on the plane at infinity $\R^2\times\{0\}$. In particular every such a half-sphere containing the point $(x_0,u(x_0))_H$ is given by $$\{(x,s)_E: e^{-2u(x_0)}+\|c-x_0\|^2=\|x-c\|^2+s^2\}~.$$
Coming back to $(,)_H$ coordinates, we have obtained that the horograph of $u$ is convex if and only if for all $x_0\in\R^2$ there exists a point $c\in\R^2$ such that for any $x\in\R^2$
$$
u(x)\leq -\frac 12 \ln \left(e^{-2u(x_0)}+\|c-x_0\|^2-\|c-x\|^2\right)~,$$
that is,
\begin{equation}\label{conv1}
e^{-2u(x)}-e^{-2u(x_0)}\geq\|c-x_0\|^2-\|c-x\|^2=-\|x-x_0\|^2+2\left\langle G,x-x_0\right\rangle~,
\end{equation}
where $G:=c-x_0\in\R^2$. Since $x\in\R^2$ is arbitrary, \eqref{conv1} is equivalent to
\begin{equation}\label{conv3}
e^{-2u(x_0+v)}+\|x_0+v\|^2-e^{-2u(x_0)}-\|x_0\|^2\geq \left\langle 2(x_0+G),v\right\rangle,\quad\forall v\in\R^2~.
\end{equation}
In turn, \eqref{conv3} means that if $G \in\R^2$ is such that \eqref{conv1} is satisfied, then at each point $x_0\in\R^2$ the graph of the function $e^{-2u(x)}+\|x\|^2$ has the planar graph of  $v\mapsto \left\langle 2(x_0+G),v\right\rangle+e^{-2u(x_0)}+\|x_0\|^2$ as a support plane. Hence $e^{-2u(x)}+\|x\|^2$ is a convex function on $\R^2$ if and only if the horograph of $u$ is convex in $\H^3$.
\end{proof}

\begin{rem}
Suppose that $x=x_0+sh$ for some unitary vector $h\in\R^2$ and $s>0$. Then \eqref{conv1} reads
\begin{equation}\label{conv2}
\frac 1s\left( e^{-2u(x_0+sh)}-e^{-2u(x_0)}\right)\geq-s|h|^2+2\left\langle G,h\right\rangle~.
\end{equation}
If $u\in C^1(\R^2)$, taking the limit as $s\to 0$ we get that
$$
\left\langle\textrm{grad}_{x_0}e^{-2u},h\right\rangle\geq \left\langle 2G,h\right\rangle~,
$$
and since the latter inequality holds for both $h$ and $-h$ we have necessarily that $G$ is unique and $$G=\frac{1}{2}\textrm{grad}_{x_0}e^{-2u}=-e^{-2u(x_0)}\textrm{grad}_{x_0}u~.
$$
\end{rem}

\begin{cor}\label{cor: ajout cst}
Let $u$ be horoconvex and $\epsilon >0$. Then 
$u+\epsilon$ is horoconvex.
\end{cor}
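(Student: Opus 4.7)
The plan is to reduce everything to Proposition~\ref{prop:car u}, which characterizes horoconvexity via the convexity of the auxiliary function $\phi_u(x) := e^{-2u(x)} + \|x\|^2$. By assumption $\phi_u$ is convex, and I must show that the analogous function for $u+\epsilon$, namely
\[
\phi_{u+\epsilon}(x) = e^{-2(u(x)+\epsilon)} + \|x\|^2 = e^{-2\epsilon}\, e^{-2u(x)} + \|x\|^2,
\]
is also convex.

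The key trick is to decompose $\phi_{u+\epsilon}$ as a nonnegative linear combination of two already convex functions. Setting $\lambda := e^{-2\epsilon}$, I note that $\epsilon>0$ forces $\lambda \in (0,1)$, so both $\lambda$ and $1-\lambda$ are strictly positive. Then I would write
\[
\phi_{u+\epsilon}(x) = \lambda\bigl(e^{-2u(x)} + \|x\|^2\bigr) + (1-\lambda)\,\|x\|^2 = \lambda\,\phi_u(x) + (1-\lambda)\,\|x\|^2.
\]
The first summand is a positive multiple of $\phi_u$, convex by the hypothesis on $u$ via Proposition~\ref{prop:car u}; the second is a positive multiple of the Euclidean quadratic $\|x\|^2$, which is convex. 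A nonnegative linear combination of convex functions is convex, so $\phi_{u+\epsilon}$ is convex.

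Applying Proposition~\ref{prop:car u} in the reverse direction then yields that $u+\epsilon$ is horoconvex, which is exactly the claim. There is no real obstacle here: the statement is essentially the observation that horoconvexity, once translated into ordinary convexity of $\phi_u$, is preserved by adding a constant to $u$ because the effect is to rescale the ``nonlinear'' term $e^{-2u}$ by a factor strictly less than $1$, which the convex term $\|x\|^2$ absorbs trivially. The only thing to be slightly careful about is that the rescaling factor $e^{-2\epsilon}$ is less than $1$ (which is why $\epsilon>0$ is needed), so that the compensating coefficient $1-\lambda$ in front of $\|x\|^2$ remains nonnegative.
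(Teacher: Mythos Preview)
Your proof is correct and is essentially the same as the paper's: both reduce to Proposition~\ref{prop:car u} and then write the target function as a nonnegative combination of the known convex function $e^{-2u}+\|x\|^2$ and the convex function $\|x\|^2$. The only cosmetic difference is that the paper first multiplies through by $e^{2\epsilon}$ and decomposes $e^{-2u}+e^{2\epsilon}\|x\|^2 = (e^{-2u}+\|x\|^2) + (e^{2\epsilon}-1)\|x\|^2$, whereas you work directly with $e^{-2\epsilon}(e^{-2u}+\|x\|^2) + (1-e^{-2\epsilon})\|x\|^2$.
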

\begin{proof}
By Proposition~\ref{prop:car u}, one has to see that
$e^{-2\epsilon}e^{-2u(x)}+\|x\|^2$ is convex, 
that is equivalent to the convexity of $e^{-2u(x)}+e^{2\epsilon}\|x\|^2$. Let $f+g$ be convex, $g$ convex and $\lambda >1$.
Then $f+\lambda g = (f+ g) + (\lambda-1)g$ is convex as a sum of two convex functions.
\end{proof}

\begin{exl}
In  dimension $1$, the function $t\mapsto  \cos(t)/20$ is horoconvex, see Figure~\ref{fig:graph}.
More generally, any function $C^2$ close to a constant function is 
horoconvex. But $t\mapsto \cos(t)$ is not horoconvex. This example shows that $u$ horoconvex does not imply $\lambda u$ horoconvex.

\begin{figure}[h!]
\centering
\includegraphics[scale=0.5]{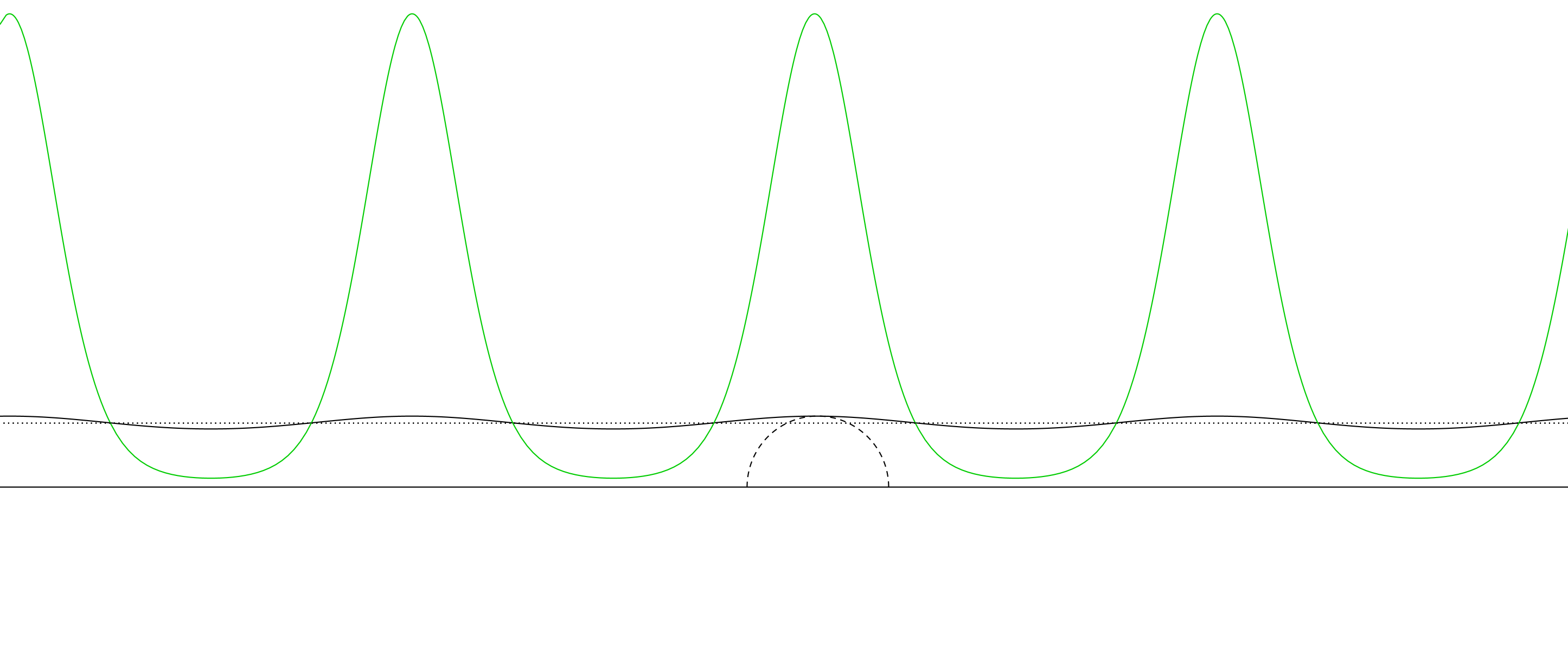}
\caption{Graphs of $t\mapsto e^{-\cos(t)/10}$ and $t\mapsto e^{-2\cos(t)}$.}
\label{fig:graph}
\end{figure}
\end{exl}

Horoconvex functions inherit strong properties from 
convex functions.

\begin{cor}\label{cor : equilip}
Any  sequence  of uniformly bounded  horoconvex functions is equi-Lipschitz on 
any compact set of $\R^2$.

Moreover, up to extracting a subsequence, the sequence converges uniformly on 
any compact set to a horoconvex function.
\end{cor}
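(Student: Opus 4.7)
The plan is to reduce the corollary to the classical theory of convex functions via Proposition~\ref{prop:car u}, which identifies horoconvexity of $u$ with convexity of
$$v(x) \;:=\; e^{-2u(x)} + \|x\|^2.$$
Given a uniformly bounded sequence of horoconvex functions $u_n$, say $|u_n| \leq M$ on $\R^2$, the associated $v_n$ are all convex and satisfy
$$e^{-2M} + \|x\|^2 \;\leq\; v_n(x) \;\leq\; e^{2M} + \|x\|^2,$$
hence $\{v_n\}$ is uniformly bounded on every compact set $K \subset \R^2$.

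Next, I would invoke the classical fact that a family of convex functions on $\R^2$ which is uniformly bounded in an open neighbourhood of a compact set $K$ is automatically equi-Lipschitz on $K$ (this is the quantitative form of the local Lipschitz property of a convex function, see e.g. Rockafellar's \emph{Convex Analysis}, Theorem 10.6, applied simultaneously to the whole family since the estimate depends only on the uniform bound). This yields a constant $L_K$ with $|v_n(x) - v_n(y)| \leq L_K \|x-y\|$ for all $x,y \in K$ and all $n$. To transfer the estimate back to $u_n$, write $e^{-2 u_n(x)} = v_n(x) - \|x\|^2 \in [e^{-2M}, e^{2M}]$ and note that $s \mapsto -\tfrac{1}{2}\log s$ is Lipschitz on this interval while $x \mapsto \|x\|^2$ is Lipschitz on $K$; this gives an equi-Lipschitz estimate for $\{u_n\}$ on $K$, which is the first statement.

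For the second statement, the family $\{u_n\}$ is equi-bounded and equi-Lipschitz on each compact set, so by Arzel\`a--Ascoli and a diagonal extraction along a compact exhaustion of $\R^2$, I extract a subsequence (still denoted $u_n$) converging uniformly on every compact set to some continuous $u: \R^2 \to \R$. Then $v_n$ converges uniformly on every compact set to $v(x) = e^{-2u(x)} + \|x\|^2$. A pointwise limit of convex functions is convex, so $v$ is convex on $\R^2$, and Proposition~\ref{prop:car u} then gives that $u$ is horoconvex.

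The only non-routine ingredient is the quantitative equi-Lipschitz lemma for families of convex functions invoked above; everything else is formal. I do not expect any serious obstacle: the proof is essentially an exercise in importing standard convex-analysis machinery through the identification of Proposition~\ref{prop:car u}.
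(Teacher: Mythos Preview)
Your proposal is correct and follows essentially the same route as the paper: reduce to convex functions via Proposition~\ref{prop:car u}, invoke the standard equi-Lipschitz bound for uniformly bounded convex families (the paper cites Rockafellar~10.4, you cite~10.6), and transfer back using that $s\mapsto -\tfrac12\log s$ and $x\mapsto\|x\|^2$ are Lipschitz on the relevant ranges. The only cosmetic difference is in the second part: the paper extracts a convergent subsequence of the convex functions $v_n$ directly (Rockafellar~10.9) and then defines $u$ from the limit, whereas you first apply Arzel\`a--Ascoli to the $u_n$ and then check that the limit is horoconvex; both orderings are equally valid.
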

\begin{proof}
Let $C\subset \R^2$ be a compact set and
$(u_n)$ a sequence of uniformly bounded  horoconvex functions.
Let $F_n(x)=e^{-2u_n(x)}+\|x\|^2$, which is convex by Proposition~\ref{prop:car u}. By
\cite[10.4]{Roc97}, there exists an $\epsilon >0$ such that for any $x,y\in C$
$$|F_n(x)-F_n(y)|\leq \frac{\operatorname{max}_C F_n -\operatorname{min}_C F_n}{\epsilon} \|x-y\|~. $$
As the $u_n$ are uniformly bounded, $F_n$ are uniformly bounded on $C$, hence there exists
a number $A$ satisfying
$$|F_n(x)-F_n(y)|\leq A \|x-y\|~. $$
Using again that the $u_n$ are uniformly bounded, and that $$\left|\|x\|^2-\|y\|^2\right|\leq2\left(\sup_C\|x\|\right)\|x-y\|~,$$
we thus obtain that 
$$|u_n(x)-u_n(y)|\leq A' \|x-y\| $$
for some $A'=A'(C)>0$ independent of $n$. So the sequence is equi-Lipschitz on $C$.

Up to extract a subsequence, the sequence of  convex functions $(F_n)$
converge (uniformly on compact sets) to a convex function $F$
 \cite[Theorem~10.9]{Roc97}). As the $u_n$ are uniformly bounded, there exists a positive constant $c$ such that $F_n\geq c +\|\cdot\|^2$, so $F>\|\cdot\|^2$, hence the function
$$u=-\frac{1}{2}\ln(F-\|\cdot\|^2) $$
is well defined and horoconvex by definition.
As $(F_n)$ converges  to $F$ uniformly on compact sets, it follows easily 
that $u_n$ converges to $u$ uniformly on compact sets.
\end{proof}

In particular, a horoconvex function is Lipschitz on any compact set.
By  Rademacher theorem, it is differentiable almost everywhere.

\subsection{Induced metric}

The length of a curve $c$ in $\H^3$ is the supremum of the length of all the polygonal paths of $\H^3$
with vertices on $c$. Equivalently \cite[Theorem~2.7.6]{BBI}, if $c$ is Lipschitz  (in $\H^3$), then

$$L(c)=\int \|c'\|_{\H^3}~.$$

Let $S$ be a convex surface in $\H^3$.  The (intrinsic) \emph{induced metric} on $S$ between two points 
$a,b$ of $S$ is the infimum of the lengths of all the rectifiable Lipschitz curves between $a$ and $b$. 

Let $u$ be a horoconvex function. 
Let $\tilde{d}_u$ be the intrinsic metric induced on the horograph of $u$. For simplicity, we will look at  metrics onto $\R^2$ rather than on the surfaces: for $(x,y)\in \R^2$, 
$$d_u(x,y):=\tilde{d}_u\left(\left(x,e^{-u(x)}\right),\left(y,e^{-u(y)}\right)\right)~. $$

Note that, in the upper half space model, notions of locally Lipschitz are equivalent for the metrics of $\H^3$ and $ \R^3$.
Also, the projection from $\R^3$ onto the horizontal plane is contracting. Hence, 
a locally Lipschitz curve of $\H^3$ is projected onto a locally Lipschitz curve of $\R^2$.

Let $c:[a,b]\rightarrow \R^2$ be a Lipschitz curve.
Let $c_u$ be the corresponding curve in the horograph of $u$ i.e. on the 
graph of $e^{-u}$:
$$c_u=\binom{c}{e^{-u\circ c}}~. $$
As $u$ is Lipschitz, $c_u$ is a Lipschitz curve of $\H^3$.

Let us denote by $L_u(c)$ the length of $c_u$ for the
metric $\tilde{d}_u$ on $S_u$. Using the half-space model metric
$$ L_u(c)=\int_a^b \frac{\|c_u'\|_{\R^3}}{(c_u)_3} =\int_a^b e^{u\circ c} \left(\|c'\|^2+((u\circ c)')^2e^{-2(u\circ c)} \right)^{1/2}$$

i.e.

\begin{equation}\label{eq:length}
L_u(c)= \int_a^b \left( e^{2u\circ c}\|c'\|^2+((u\circ c)')^2 \right)^{1/2}~.
\end{equation}

\begin{lem}\label{lem:bilip}
Let $(u_n)$ be a uniformly bounded sequence of horoconvex functions.  
Then on any compact set $K$, $d_{u_n}$ are uniformly Lipschitz equivalent to 
the Euclidean metric: $\exists \lambda_1,\lambda_2>0$ such that
$$\lambda_1 d_{\R^2} \leq d_{u_n} \leq \lambda_2 d_{\R^2}~. $$
Moreover, for any Lipschitz curve $c$ contained in $K$,
$$\lambda_1 L_{\R^2}(c) \leq L_{u_n}(c) \leq \lambda_2 L_{\R^2}(c)~. $$
\end{lem}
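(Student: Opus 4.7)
The plan is to insert pointwise bounds furnished by uniform boundedness and by Corollary~\ref{cor : equilip} into the integral formula \eqref{eq:length} for $L_{u_n}(c)$. Let $M$ be a uniform sup-norm bound for the $u_n$, so that $e^{-M}\leq e^{u_n}\leq e^{M}$ on all of $\R^2$. Fix a compact convex set $K'\supset K$ (for instance a closed ball large enough to contain $K$), and let $A'=A'(K')>0$ be a common Lipschitz constant on $K'$ for all the $u_n$, as provided by Corollary~\ref{cor : equilip}. For any Lipschitz curve $c$ whose image lies in $K'$, the Lipschitz estimate on $u_n$ gives $|(u_n\circ c)'|\leq A'\|c'\|$ almost everywhere, directly from the difference quotients $|u_n(c(t+h))-u_n(c(t))|\leq A'\|c(t+h)-c(t)\|$ (no chain rule is needed).

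The length estimates then follow immediately from \eqref{eq:length}. Dropping the second summand under the square root yields
\begin{equation*}
L_{u_n}(c)\geq \int_a^b e^{u_n\circ c}\,\|c'\|\geq e^{-M}L_{\R^2}(c),
\end{equation*}
while the pointwise majorization $e^{2u_n\circ c}\|c'\|^2+((u_n\circ c)')^2\leq (e^{2M}+(A')^2)\|c'\|^2$, inserted in \eqref{eq:length}, gives $L_{u_n}(c)\leq \sqrt{e^{2M}+(A')^2}\,L_{\R^2}(c)$. Setting $\lambda_1:=e^{-M}$ and $\lambda_2:=\sqrt{e^{2M}+(A')^2}$ establishes the length inequalities for every Lipschitz curve contained in $K$.

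For the distance bounds these length estimates have to be used asymmetrically. For the upper inequality, one plugs in the Euclidean segment $[x,y]$, which lies in $K'$ by convexity, to obtain $d_{u_n}(x,y)\leq L_{u_n}([x,y])\leq \lambda_2\|x-y\|$. The delicate point is the lower inequality: a curve competing in the infimum defining $d_{u_n}(x,y)$ may well leave $K$, so the equi-Lipschitz bound on $u_n$ is not available along it. Fortunately, only the cheap estimate $L_{u_n}(c)\geq e^{-M}L_{\R^2}(c)$ is needed here, and this uses only the \emph{global} sup-norm bound on $u_n$; it is therefore valid for every Lipschitz curve $c$ in $\R^2$ joining $x$ to $y$. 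Since $L_{\R^2}(c)\geq \|x-y\|$, taking the infimum yields $d_{u_n}(x,y)\geq \lambda_1\|x-y\|$, completing the proof.
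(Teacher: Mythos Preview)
Your proof is correct and, for the upper bound, essentially the same as the paper's (both plug the uniform bound on $u_n$ and the equi-Lipschitz constant from Corollary~\ref{cor : equilip} into \eqref{eq:length}; the paper uses $\sqrt{a^2+b^2}\leq |a|+|b|$ where you bound under the root directly, which is cosmetic).

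The genuine difference is in the lower bound. The paper argues extrinsically: $d_{u_n}(x,y)$ dominates the ambient hyperbolic distance between the lifted points, the horographs over $K$ sit in a fixed hyperbolic ball, and on that ball $d_{\H^3}\geq\lambda_1 d_{\R^3}\geq\lambda_1 d_{\R^2}$; the length inequality is then deduced from the distance inequality via the polygonal-approximation definition of length. You instead stay entirely inside the integral formula \eqref{eq:length}, dropping the $((u_n\circ c)')^2$ term to get $L_{u_n}(c)\geq e^{-M}L_{\R^2}(c)$ for \emph{every} Lipschitz curve in $\R^2$, and then pass to the infimum. Your route is more elementary---it avoids the bi-Lipschitz comparison of $d_{\H^3}$ and $d_{\R^3}$ on compacta---and it makes explicit a point the paper's extrinsic argument sidesteps: that curves competing for $d_{u_n}(x,y)$ need not stay in $K$, so one must use a bound valid globally. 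Since only the sup-norm bound $|u_n|\leq M$ enters the lower length estimate, this is exactly what you have.
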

\begin{proof}
By construction, for any $x,y\in K$, $d_{u_n}(x,y)$ is 
not less than the distance in $\H^3$ between the corresponding points 
on the horograph of $u$.
As $K$ is compact and the $u_n$ uniformly bounded, the corresponding horographs 
above $K$ in $\R^3$ are contained in a hyperbolic ball. There exists a constant
$\lambda_1$ such that on this ball, $d_{\H^3}\geq \lambda_1 d_{\R^3}$. Also, with evidence, $d_{\R^3}\geq d_{\R^2}$. Hence,  $d_{\H^ 3}\geq \lambda_1 d_{\R^2}$. 
As the length of a Lipschitz curve for $d_{u_n}$ is the supremum 
of the length of  shortest polygonal paths \cite[2.3.4,2.4.3]{BBI} it follows that for any curve $c$ in $K$,
$\lambda_1 L_{\R^2}(c)\leq L_{u_n}(c)$.

On the other hand, for any curve $c$ in $K$, it follows from \eqref{eq:length} and from the inequality 
$\sqrt{a^2+b^2}\leq |a| + |b| $ that
$$L_{u_n}(c)\leq \int_a^b e^{-u_n}\|c'\| + |(u\circ c)'| $$
and
as the $u_n$ are uniformly bounded  on $K$ by assumption, 
and also $u_n$ are equi-Lipschitz on $K$ by Corollary~\ref{cor : equilip}, then there exists $\lambda_2$ such that  
$$L_{u_n}(c)\leq \lambda_2 L_{\R^2}(c)~.$$

\end{proof}

The proof of the following lemma mimics the one for convex bodies in Euclidean space \cite[p. 358]{BBI}.  
%Unfortunately the analogy does not go further as they are no equivalent of homotheties in our case.

\begin{lem}\label{cor:BF2}
Let $u,v$ be horoconvex such that $u\leq v$ and
$$\delta= \operatorname{sup}_{\R^2}(v-u)$$
is finite.
Then
$$d_u\leq d_v+ 2 \delta~. $$
\end{lem}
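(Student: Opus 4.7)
The plan is to adapt the standard nearest-point-projection argument used for nested convex bodies in Euclidean space. The key is that $u\leq v$ translates into an inclusion of the hyperbolic convex bodies bounded by the two horographs, and the nearest-point projection onto the smaller one is $1$-Lipschitz in $\H^3$.

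First I would identify the convex bodies. Using the signed-distance coordinates $(x,t)_H$, the convex body whose boundary is the horograph of $u$ is $A=\{(x,t)_H:t\leq u(x)\}$, and likewise $B=\{(x,t)_H:t\leq v(x)\}$. Convexity is guaranteed by Proposition~\ref{prop:car u}, and $u\leq v$ gives $A\subset B$, with $\partial A$ and $\partial B$ being the horographs of $u$ and $v$. Denote by $\pi_A:\H^3\to A$ the nearest-point projection; it is $1$-Lipschitz, it is the identity on $A$, and it maps $\H^3\setminus\mathrm{int}(A)$ to $\partial A$.

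Next, fix $x,y\in\R^2$ and $\epsilon>0$. Choose a Lipschitz curve $\gamma$ on the horograph of $v$ from $x_v=(x,v(x))_H$ to $y_v=(y,v(y))_H$ with $L_{\H^3}(\gamma)\leq d_v(x,y)+\epsilon$. Build a curve $\sigma$ in $\H^3$ from $x_u=(x,u(x))_H$ to $y_u=(y,u(y))_H$ by concatenating three pieces: (i) the vertical geodesic segment from $x_u$ to $x_v$, of hyperbolic length $v(x)-u(x)\leq\delta$; (ii) the curve $\gamma$ on $\partial B$; (iii) the vertical geodesic segment from $y_v$ to $y_u$, of length $\leq\delta$. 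Thus $L_{\H^3}(\sigma)\leq d_v(x,y)+2\delta+\epsilon$.

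The crucial point is that $\sigma$ lies in $\H^3\setminus\mathrm{int}(A)$: the two vertical segments have $t$-coordinates between $u(x)$ and $v(x)$ (resp.\ $u(y)$ and $v(y)$), hence $t\geq u$, while $\gamma$ lies on $\partial B$ where $t=v\geq u$. Applying $\pi_A$ therefore produces a curve on $\partial A$ from $\pi_A(x_u)=x_u$ to $\pi_A(y_u)=y_u$ of length at most $L_{\H^3}(\sigma)$. Since lengths on the horograph of $u$ are measured with the ambient $\H^3$-metric, this yields
\[
d_u(x,y)\leq L_{\H^3}(\pi_A\circ\sigma)\leq L_{\H^3}(\sigma)\leq d_v(x,y)+2\delta+\epsilon,
\]
and letting $\epsilon\to 0$ gives the claim. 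The main point to verify carefully is the geometric setup: that $u\leq v$ does put $A$ \emph{inside} $B$ (one has to keep track of signs, since larger $u$ means smaller Euclidean height $e^{-u}$ in the upper half-space model) and that the auxiliary curve $\sigma$ never enters $\mathrm{int}(A)$, so that $\pi_A$ actually lands on $\partial A$ rather than in the interior.
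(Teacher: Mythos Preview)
Your argument is correct and is essentially the same as the paper's: both use that $u\le v$ makes the convex body $A$ bounded by the horograph of $u$ sit inside the one bounded by the horograph of $v$, and then exploit the $1$-Lipschitz nearest-point projection onto $A$ (the hyperbolic Busemann--Feller lemma) together with the fact that the vertical segment from $(x,u(x))_H$ to $(x,v(x))_H$ has length at most $\delta$. The only cosmetic difference is that you concatenate the two vertical segments with a near-geodesic on $\partial B$ and project the whole curve at once, whereas the paper projects the endpoints $(a,v(a))_H$, $(b,v(b))_H$ to $\partial A$, bounds $d_u(p_\bot(a),p_\bot(b))\le d_v(a,b)$ and $d_u(p_\bot(a),a)\le\delta$ separately, and finishes with the triangle inequality.
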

\begin{proof}

Recall that the Hyperbolic Busemann--Feller Lemma, \cite[II.2.4]{BH1999} says that 
the orthogonal projection onto a convex set in $\H^3$ is contracting. 
 $u\leq v$ implies that the horograph of $v$ is in the exterior of the convex set bounded by the horograph of 
$u$, so the orthogonal projection is well defined from the horograph of $v$ onto the horograph of $u$.
For $a\in \R^2$, let $p_\bot(a)$ be the vertical projection onto
$\R^2$ of the orthogonal projection of $(a,v(a))$ onto the horograph of $u$.

On one hand, Busemann--Feller Lemma implies that
\begin{equation}\label{eqBF2}
 d_u(p_\bot(a),p_\bot(b))\leq d_v(a,b)~.
\end{equation}

On the other hand, Busemann--Feller Lemma implies that
$d_u(p_\bot(a),a)$ is less than the distance 
in $\H^3$ between $(a,v(a))$ and $(a,u(a))$, 
and this last quantity is less than $\delta$ by assumption, so
 
 \begin{equation}\label{eq:2}
  d_u(p_\bot(a),a) \leq \delta~.
 \end{equation}

 The result follows from \eqref{eqBF2}, \eqref{eq:2} and the triangle inequality.
\end{proof}

\section{Proof of Theorem~\ref{thm:main}}\label{sec:proof}

Now let $(S,m)$ be a CBB($-1$) metric on the torus. According to Theorem~\ref{thm: aprox pol} there exists a sequence of polyhedral CBB($-1$) metric $m_n$ on the torus 
Gromov--Hausdorff converging to $m$. By  Theorem~\ref{thm:main1poly}, for any $n$ there exists a hyperbolic cusp $\mathrm{C}_n$ with convex  boundary and induced metric on $\partial \mathrm{C}_n$ isometric to $m_n$.

As mentioned in the introduction, the universal cover $\widetilde{\mathrm{C}_n}$ can be isometrically embedded as a convex subset of $\H^3$ via the developing map $D$.
 The action of the fundamental group $\pi_1(\mathrm{C}_n) \cong \pi_1(T)$ on $\widetilde{\mathrm{C}_n}$ by deck transformations yields a representation $\rho: \pi_1 (T) \to \mathrm{Iso}^+(\H^3)$.
The cusp $\mathrm{C}_n$ contains a totally umbilic torus $M$ with Euclidean metric. It
 follows that the developing map sends the universal cover of $M$ to  the horosphere $H$. The group $\rho(\pi_1 (\mathrm{C}_n))=\Gamma_n$ acts on $D(\widetilde{M})$ freely with a compact orbit space. 
 The group $\Gamma_n$ is a group of parabolic isometries. 
 The surface $S_n=\partial D(\widetilde{\mathrm{C}_n})$ is convex and globally invariant under the action of $\Gamma_n$. It is easy to see that 
 $S_n$ is homeomorphic to $H$ via the central projection from the center of $D(\widetilde{M})$.
Up to rotations of the hyperbolic space, we normalize the surfaces $S_n$ in such a way that the point fixed by $\Gamma_n$ is 
$\infty$ in the half space model.
Moreover, choosing a point $x_0$ in the universal cover of the torus, up to compose by parabolic and hyperbolic isometries, we consider that the developing maps send $x_0$ onto
$(0,0,1)_E$. 

So the surfaces $S_n$ are described by horoconvex functions $u_n$. We identify $\Gamma_n$ with the corresponding lattice in $H=\R^2$. In particular, 
 $u_n(\gamma\cdot x)=u_n(x)$ for any $\gamma\in \Gamma_n$ and
$x\in \R^2$, and the normalization above says that   $u_n (0) = 0$
for any $n$. The quotient of $d_{u_n}$ by $\Gamma_n$ is isometric to $m_n$.

We will prove that $(u_n,\Gamma_n)$ converge to some $(u,\Gamma)$ and that the quotient of $d_u$ by $\Gamma$ is isometric to $m$. 
This will prove Theorem~\ref{thm:main}: the wanted cusp is the quotient by $\Gamma$ of the convex side of the horograph of $u$.

\subsection{A uniform bound on horographs}

 From Lemma~\ref{lem:diam conv}, 
there exists a uniform upper bound $\mathsf{diam}$ of all the diameters of the metrics $m_n$.

\begin{lem}\label{lem : uniform seq}
\begin{enumerate}
\item
The sequence $(u_n)_n$ is uniformly bounded.
\item There is a compact set $D\subset\R^2$ such that for any $y\in \R^2$ and any $n$, there exists $\gamma\in\Gamma_n$ with $\gamma\cdot y\in D$.
\end{enumerate}
\end{lem}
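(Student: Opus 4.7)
The plan is to reduce both statements to a single ambient hyperbolic-distance estimate from the base point $(0,1)_E$ — which corresponds to $(0,u_n(0))_H$ thanks to the normalization $u_n(0)=0$ — to the surface point $(\gamma_n\cdot y,\,e^{-u_n(y)})_E$ for a well-chosen representative $\gamma_n\cdot y$ of the $\Gamma_n$-orbit of $y$. By Lemma~\ref{lem:diam conv}, the Gromov--Hausdorff convergence $m_n\to m$ forces $\mathrm{diam}(T,m_n)\le \mathsf{diam}$ for a uniform constant. Since $(\R^2/\Gamma_n,\,d_{u_n}/\Gamma_n)$ is isometric to $(T,m_n)$ and shortest paths exist in CBB($-1$) spaces by Hopf--Rinow, for every $y\in\R^2$ and every $n$ one can pick $\gamma_n\in\Gamma_n$ with $d_{u_n}(0,\gamma_n\cdot y)\le\mathsf{diam}$. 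The distance $d_{u_n}$ is intrinsic on a subset of $\H^3$ and so dominates the ambient distance, whence, using $\Gamma_n$-invariance of $u_n$,
$$d_{\H^3}\bigl((0,1)_E,\;(\gamma_n\cdot y,\,e^{-u_n(y)})_E\bigr)\;\le\;\mathsf{diam}.$$

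The next step is to insert this into the half-space formula $\cosh d=1+\frac{\|p-q\|_E^2}{2\,s_p s_q}$, where the Euclidean norm includes the vertical coordinate. This gives
$$\cosh\mathsf{diam}\;\ge\;1+\frac{\|\gamma_n\cdot y\|^2+(1-e^{-u_n(y)})^2}{2\,e^{-u_n(y)}}.$$
Writing $a:=e^{-u_n(y)}>0$ and $C:=\cosh\mathsf{diam}-1$, the two non-negative summands can be peeled off separately. The contribution $(1-a)^2\le 2Ca$ rearranges to $a^2-2(1+C)a+1\le 0$, which traps $a$ in a compact sub-interval $[a_-,a_+]\subset(0,\infty)$ depending only on $\mathsf{diam}$; hence $u_n(y)=-\ln a$ is uniformly bounded, proving~(1). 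The other contribution $\|\gamma_n\cdot y\|^2\le 2Ca$ combined with $a\le a_+$ yields $\|\gamma_n\cdot y\|^2\le 2Ca_+$, so $D:=\overline{B}(0,\sqrt{2Ca_+})$ works for~(2).

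I expect no serious obstacle here. The only analytic input is that $d_{u_n}$ is an intrinsic distance dominating the ambient $d_{\H^3}$; horoconvexity itself is not used. The conceptual point requiring care is that the two normalizations — $u_n(0)=0$ and $0\in\Gamma_n\cdot 0$ — must be combined to give a single base point $(0,1)_E$ available uniformly in $n$, after which the two independent terms in the hyperbolic distance formula can each be handled by the elementary quadratic inequality above.
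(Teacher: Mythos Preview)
Your proof is correct and follows essentially the same approach as the paper: both combine the uniform diameter bound $\mathsf{diam}$ with the inequality $d_{u_n}\ge d_{\H^3}$ to trap the surface points $(\gamma_n\cdot y,e^{-u_n(y)})_E$ in a fixed hyperbolic ball around $(0,1)_E$, from which both items follow. The only difference is presentational: the paper argues geometrically (the hyperbolic ball lies between two horospheres and projects to a Euclidean disc), while you extract the same two bounds by plugging into the explicit half-space distance formula and solving the resulting quadratic.
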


\begin{proof}
Recall that all the horographs $S_n$ of $u_n$ pass through $x_0=(0,0,1)_E$. Let  $b_n$
be the set of points on $S_n$ at distance $\leq$ $\mathsf{diam}$ from $(0,0,1)_E$ in the intrinsic metric of $S_n$. Then as the distance 
on $S_n$ is greater than the extrinsic distance of $\H^3$, all the $b_n$ are contained in a same hyperbolic ball $B$.

In the half space model, let $D$ be the projection of the ball $B$ onto the horizontal plane passing through the origin. Observe that $D=\overline{B^{\R^2}_\delta(0)}$ is a Euclidean closed ball centred at the origin of $\R^2$.
As $B$ is contained between two horospheres centred at $\infty$ (i.e. two horizontal planes)
the horofunctions $u_n$ are uniformly bounded on $D$, say $c_1<u_n<c_2$. Now by construction, for any $y\in \R^2$, there exists $\gamma\in\Gamma_n$ such that $\gamma\cdot y\in D$. Hence
$c_1<u_n(\gamma\cdot y)=u_n(y)<c_2$.
\end{proof}

%
%By Lemma \ref{lem : uniform seq} (1) and Corollary \ref{cor : equilip}, we get 
%\begin{cor}\label{cor : equilip2}
%The sequence $(u_n)_n$ is equi-Lipschitz on every compact set of $\R^2$.
%\end{cor}

\subsection{Convergence of groups}

\begin{lem}\label{lem : group conv}
There exists a sequence $(a_n,b_n)_n$ of generators of  $\Gamma_n$
that converges in $\R^2$ (up to extract a subsequence)
to  two linearly independent non-zero vectors $a$ and $b$.
 (Here we identify an element of $\gamma$ of $\Gamma_n$
with the vector $\gamma \cdot 0$ of $\R^2$.)
\end{lem}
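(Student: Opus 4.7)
The plan is to choose, for each $n$, a distinguished $\mathbb{Z}$-basis $(a_n, b_n)$ of the lattice $\Gamma_n \subset \R^2$ by the greedy procedure --- $a_n$ a shortest nonzero element, $b_n$ a shortest element not collinear with $a_n$ --- and then to control these two vectors from above in norm and from below both in norm and in parallelogram area. Compactness will then yield the convergent subsequence, and the area lower bound will force linear independence of the limit.

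The upper bound $\|a_n\|, \|b_n\| \leq C$ for some $n$-independent $C$ follows from Lemma~\ref{lem : uniform seq}(2): the $\Gamma_n$-translates of the fixed disk $D$ cover $\R^2$, so the covering radius of $\Gamma_n$ is at most $\operatorname{diam}(D)$, and a standard argument in the geometry of two-dimensional lattices then produces a basis with both vectors controlled by twice this covering radius.

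For the lower bound on $|a_n \wedge b_n|$: once the upper bound is in hand, every fundamental parallelogram $F_n$ spanned by $a_n, b_n$ is contained in the single compact set $K = \overline{B^{\R^2}_{2C}(0)}$. Applying Lemma~\ref{lem:bilip} on $K$ yields constants $\lambda_1, \lambda_2$ independent of $n$ with $d_{u_n}$ bi-Lipschitz equivalent to $d_{\R^2}$, and the same comparison passes to the two-dimensional Hausdorff measure, so
$$\operatorname{area}(T, m_n) \;=\; \operatorname{area}_{d_{u_n}}(F_n) \;\leq\; \lambda_2^{2}\,\operatorname{area}_{\R^2}(F_n) \;=\; \lambda_2^{2}\,|a_n \wedge b_n|.$$
Since $(T, m_n)$ is a sequence of CBB($-1$) metrics Gromov--Hausdorff converging to the CBB($-1$) metric $m$, the area bound quoted immediately after Lemma~\ref{lem:diam conv} gives $\operatorname{area}(T, m_n) \geq c_0 > 0$ uniformly in $n$, whence $|a_n \wedge b_n| \geq c_0/\lambda_2^{2}$. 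Combined with $\|b_n\| \leq C$ this also forces $\|a_n\| \geq c_0/(C \lambda_2^{2})$, and symmetrically for $\|b_n\|$.

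All four quantities being bounded above and below, Bolzano--Weierstrass extracts a subsequence with $a_n \to a$ and $b_n \to b$ in $\R^2$, and passing to the limit in the area inequality yields $|a \wedge b| \geq c_0/\lambda_2^{2} > 0$, so $a$ and $b$ are linearly independent. The main delicacy is that the steps must be carried out in this order: the lower bound on the parallelogram area requires the bi-Lipschitz constants of Lemma~\ref{lem:bilip} to be uniform on a \emph{single} compact set containing every $F_n$, which is only legitimate after the upper bound on the generators has been secured. A subsidiary point to justify is the upgrade of Lemma~\ref{lem:bilip}'s length comparison to the two-dimensional Hausdorff measure comparison used above, which is a standard consequence of bi-Lipschitz equivalence of metrics.
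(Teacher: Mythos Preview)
Your proposal is correct and follows essentially the same route as the paper: bound the generators using Lemma~\ref{lem : uniform seq}(2), then use the bi-Lipschitz comparison of Lemma~\ref{lem:bilip} on a fixed compact set together with the uniform positive lower bound on $\operatorname{area}(T,m_n)$ (the theorem quoted after Lemma~\ref{lem:diam conv}) to rule out degeneration of the fundamental parallelogram. The paper phrases the last step as a proof by contradiction rather than as your direct uniform lower bound on $|a_n\wedge b_n|$, and it cites \cite[Proposition~3.1.4]{AT04} for the passage from bi-Lipschitz equivalence to Hausdorff-measure comparison that you flag as a ``subsidiary point,'' but the substance is the same.
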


\begin{proof}

Let us choose generators $(a_n,b_n)$ of $\Gamma_n$ which are contained in $\overline{B^{\R^2}_{3\delta}(0)}$, that is possible by the second item of Lemma~\ref{lem : uniform seq}.
Since $\overline{B^{\R^2}_{3\delta}(0)}$ is compact, up to take a subsequence we get the existence of two vectors $a,b\in\R^2$ such that $a_n\to a$ and $b_n\to b$ as $n\to\infty$.

Suppose that either one of the vectors $a$ or $b$ is zero, or that they are parallel. By continuity we necessarily have that the area of the parallelogram with side $a_n$ and $b_n$ tends to zero, as $n\to\infty$. In turn, this means that the area of a fundamental domain of $\R^2$ for the action of $\Gamma_n$ tend to zero as $n\to\infty$. Applying Lemma~\ref{lem:bilip} with $K=\overline{B^{\R^2}_{3\delta}(0)}$, by Proposition 3.1.4 in \cite{AT04}, the two dimensional Hausdorff measure of $m_n$
tends to zero, thus contradicting Theorem 2.4.
\end{proof}

\subsection{Construction of the solution}

By Corollary~\ref{cor : equilip} and Lemma~\ref{lem : uniform seq}, up to extract a subsequence, the sequence $(u_n)$ converges to a horoconvex function $u$, uniformly on any compact set.

Let $a,b\in\R^2$ given by Lemma~\ref{lem : group conv}, and define $\Gamma\subset \operatorname{Iso}(\R^2)$ as the direct product of $\left\langle a\right\rangle$ and $\left\langle b\right\rangle$. Since $a$ and $b$ are linearly independent vectors, $\R^2/\Gamma$ is a torus.

\begin{lem}\label{lem : well-def}
The function $u$ is $\Gamma$-invariant.
\end{lem}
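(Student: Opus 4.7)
The plan is to show $u$-invariance under the two generators $a$ and $b$ of $\Gamma$; since $a, b$ generate $\Gamma$, this suffices. The idea is to pass to the limit in the identity $u_n(x+a_n)=u_n(x)$, which holds because $\Gamma_n$ is identified with a lattice of translations of $\R^2$ and $u_n$ is $\Gamma_n$-invariant by construction.

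\textbf{Step 1: decompose the error.} Fix $x\in\R^2$. For every $n$ write
\[
u(x+a)-u(x) \;=\;\bigl[u(x+a)-u_n(x+a)\bigr]\;+\;\bigl[u_n(x+a)-u_n(x+a_n)\bigr]\;+\;\bigl[u_n(x+a_n)-u_n(x)\bigr]\;+\;\bigl[u_n(x)-u(x)\bigr].
\]
By $\Gamma_n$-invariance the third bracket vanishes, and by uniform convergence on compact sets (established right before the lemma via Corollary~\ref{cor : equilip} and Lemma~\ref{lem : uniform seq}) the first and fourth brackets tend to $0$ as $n\to\infty$.

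\textbf{Step 2: control the middle bracket via equi-Lipschitz.} Fix a compact set $K\subset\R^2$ containing the points $x$, $x+a$ and all $x+a_n$ for $n$ large (possible since $a_n\to a$). Corollary~\ref{cor : equilip} provides a constant $A=A(K)$ such that $|u_n(y)-u_n(z)|\le A\|y-z\|$ for all $y,z\in K$ and all $n$. Hence
\[
|u_n(x+a)-u_n(x+a_n)|\;\le\;A\,\|a-a_n\|\;\longrightarrow\;0
\]
as $n\to\infty$. Combining Steps~1 and~2 gives $u(x+a)=u(x)$; the same argument with $b_n\to b$ yields $u(x+b)=u(x)$.

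\textbf{Step 3: conclude.} Since $u$ is invariant under the translations by the two generators $a$ and $b$ of $\Gamma$, it is invariant under the whole group. The only nontrivial ingredient was the equi-Lipschitz property on compact sets — without it, the convergence $a_n\to a$ alone would not guarantee that $u_n(x+a_n)$ and $u_n(x+a)$ have the same limit — but this is exactly what Corollary~\ref{cor : equilip} furnishes, so no serious obstacle is expected.
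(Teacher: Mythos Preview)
Your proof is correct and follows essentially the same approach as the paper: the four-term decomposition, the use of $\Gamma_n$-invariance to kill the third term, uniform convergence for the outer terms, and the equi-Lipschitz estimate from Corollary~\ref{cor : equilip} for the middle term are exactly what the paper does. The only cosmetic difference is that the paper treats a general element $\gamma\cdot y = y + ka + k'b$ in one stroke rather than reducing to the generators, but this changes nothing of substance.
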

\begin{proof}
Let $y\in \R^2$ and $\gamma\in\Gamma$ such that $\gamma y=y+ka+k'b$, where $k,k'\in \Z$.
Then, for every $\epsilon>0$
\begin{align}
|u(\gamma y)-u(y)|
&=|u(y+ka+k'b)-u(y)|\nonumber\\
&\leq |u(y+ka+k'b)-u_n(y+ka+k'b)|\label{1}
\\
&+|u_n(y+ka+k'b)-u_n(y+ka_n+k'b_n)|\label{2}
\\
&+|u_n(y+ka_n+k'b_n)-u_n(y)|\label{3}
\\
&+|u_n(y)-u(y)|<\epsilon\label{4} 
\end{align}
for $n$ large enough. In fact $k(a-a_n)+k'(b-b_n)\to0$ as $n\to\infty$, and as the $u_n$ are equi-Lipschitz on a sufficiently large compact set
 the absolute value at line \eqref{2} is smaller than $\epsilon/4$ for $n$ large enough. Moreover, the absolute value at line \eqref{3} is zero for every $n$ by the $\Gamma_n$-invariance of $u_n$, and the absolute value at lines \eqref{1} and \eqref{4} are smaller then $\epsilon/4$ for $n$ large enough by the uniform convergence of the $u_n$. Since $\epsilon>0$ is arbitrary, this concludes the proof.
\end{proof}

\subsection{Convergence of metrics}\label{sec:conv met}

In the preceding section, we have constructed a pair $(u,\Gamma)$. It remains to check that 
the induced metric $m_u$ on $(\R^ 2,d_u)/\Gamma$ is isometric  to $(T,m)$. Basically, one has to 
check that if the sequence $(u_n)$ converges, then the  sequence of induced metric converges. In the remaining of this section we will prove the following result, that ends the proof of the theorem, because on compact metric spaces, uniform convergence imply Gromov--Hausdorff convergence, and the Gromov--Hausdorff limit is unique.

\begin{prp}\label{prop:conv metric}
The sequence $(m_n)$ uniformly converges to $m$.
\end{prp}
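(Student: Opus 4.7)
My strategy is to show that the lifted distance functions $d_{u_n}$ converge to $d_u$ uniformly on compact subsets of $\R^2\times\R^2$, to descend this convergence to the quotient tori using Lemma~\ref{lem : group conv}, and to identify the resulting limit $m_u$ with $m$ via Gromov--Hausdorff uniqueness. For the descent, I would identify $T_n=\R^2/\Gamma_n$ with $T=\R^2/\Gamma$ via the homeomorphism induced by the linear map $L_n\colon\R^2\to\R^2$, $L_na=a_n$, $L_nb=b_n$ (hence $L_n\to\mathrm{id}$ uniformly on compact sets). The bi-Lipschitz estimate of Lemma~\ref{lem:bilip} together with the uniform diameter bound of Lemma~\ref{lem:diam conv} restrict the infima defining both $m_n$ and $m_u$ to translations by a fixed finite subset $F\subset\Gamma$ (mapped via $L_n$ to the corresponding subset of $\Gamma_n$), so uniform convergence $m_n\to m_u$ on $T\times T$ will follow from uniform convergence $d_{u_n}\to d_u$ on compact subsets of $\R^2\times\R^2$.

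For the upper bound $\limsup d_{u_n}(x,y)\le d_u(x,y)$, given $\epsilon>0$ I would pick a Lipschitz curve $c$ from $x$ to $y$ with $L_u(c)\le d_u(x,y)+\epsilon$ and show $L_{u_n}(c)\to L_u(c)$ using the length formula \eqref{eq:length}. Indeed, $u_n\to u$ uniformly on the image of $c$ (Corollary~\ref{cor : equilip}), and the convex functions $F_n=e^{-2u_n}+\|\cdot\|^2$ converge uniformly on compact sets to $F=e^{-2u}+\|\cdot\|^2$ (Proposition~\ref{prop:car u} and the proof of Corollary~\ref{cor : equilip}); by a classical theorem on convex functions, $\nabla F_n\to\nabla F$ almost everywhere, hence $(u_n\circ c)'\to(u\circ c)'$ almost everywhere on $[a,b]$, and the equi-Lipschitz bound allows one to pass to the limit inside \eqref{eq:length} by dominated convergence. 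For the lower bound $\liminf d_{u_n}(x,y)\ge d_u(x,y)$, I would either take near-minimizing Lipschitz curves $c_n$ for $d_{u_n}$, parametrize them on $[0,1]$ with constant Euclidean speed (uniformly bounded by Lemma~\ref{lem:bilip}), extract a uniform sub-limit $c_n\to c$ by Arzel\`a--Ascoli, and establish the lower semicontinuity $L_u(c)\le\liminf L_{u_n}(c_n)$; or, alternatively, sandwich $d_{u_n}$ between $d_{u\pm\epsilon_n}$ via Lemma~\ref{cor:BF2} (using that $u\pm\epsilon$ is horoconvex by Corollary~\ref{cor: ajout cst}), combined with the elementary scaling inequality $L_{u-\epsilon}(c)\ge e^{-\epsilon}L_u(c)$, which follows directly from \eqref{eq:length}, and the observation that geodesics between points of a compact set stay in a compact set where $u_n\to u$ uniformly.

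I expect the main obstacle to be the lower bound: Lemma~\ref{cor:BF2} requires a global comparison $u\le v$ on all of $\R^2$, whereas the lattice-invariant functions $u_n$ and $u$ only become close on compact subsets (with a constant depending on the set), since $\Gamma_n\neq\Gamma$. Similarly, the lower semicontinuity $L_u(c)\le\liminf L_{u_n}(c_n)$ asks to control a line integral in which both $u_n$ and $c_n$ vary, and a priori one only has almost everywhere convergence of $\nabla u_n$ and uniform convergence of $c_n$. Once $d_{u_n}\to d_u$ uniformly on compact subsets is established, equi-Lipschitzness (Lemma~\ref{lem:bilip}) propagates this to uniform convergence $m_n\to m_u$ on $T\times T$; since uniform convergence of metrics on a compact space implies Gromov--Hausdorff convergence and the Gromov--Hausdorff limit is unique, the convergence $m_n\to m$ from Theorem~\ref{thm: aprox pol} forces $m_u=m$, and the uniform convergence $m_n\to m$ follows.
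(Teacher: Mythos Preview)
Your architecture coincides with the paper's: establish $d_{u_n}\to d_u$ uniformly on compact sets, descend to the tori through the linear conjugation (your $L_n$ is the paper's $\tilde\phi_n$, Lemmas~\ref{lem:conv phi}--\ref{lem:lift geod}), and conclude $m_u=m$ by Gromov--Hausdorff uniqueness. The difference lies in the proof of $d_{u_n}\to d_u$ on compacta. The paper handles both inequalities symmetrically via Lemma~\ref{cor:BF2}: from $u\le u_n+\epsilon$ it gets $d_u\le d_{u_n+\epsilon}+4\epsilon$, and then passes from $d_{u_n+\epsilon}$ to $d_{u_n}$ by comparing the two length integrals~\eqref{eq:length} along a $d_{u_n}$-geodesic, whose Euclidean length is uniformly bounded thanks to the confinement Lemma~\ref{lem: E(K)} and Corollary~\ref{lem:bound euclidien}. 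Your worry that Lemma~\ref{cor:BF2} demands a \emph{global} inequality $u\le u_n+\epsilon$ (which indeed fails, since $\Gamma_n\neq\Gamma$) is legitimate and the paper glosses over it; what rescues the argument is precisely the confinement step: every curve and every projection entering the comparison stays in a fixed compact set where $|u-u_n|<\epsilon$ does hold, so the Busemann--Feller projection can be localized. Your alternative upper bound via a.e.\ convergence of gradients and dominated convergence is heavier than needed; and for the lower bound, your Arzel\`a--Ascoli route actually works without the obstacle you anticipate: the curves $t\mapsto(c_n(t),e^{-u_n(c_n(t))})$ converge uniformly in $\H^3$, and arc length in a fixed length space is lower semicontinuous under uniform convergence of curves, so $L_u(c)\le\liminf L_{u_n}(c_n)$ follows directly.
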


\begin{lem}\label{lem: E(K)}
Let $K\subset \R^2$ be compact. Then 
$$E(K)=\operatorname{closure}\left(\cup_{x\in K}\cup_n \{y | d_{u_n}(x,y)\leq \mathsf{diam}\}\right) $$
is compact.
\end{lem}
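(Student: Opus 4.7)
The set $E(K)$ is closed by its very definition, so the lemma reduces to showing that it is bounded in $\R^2$. My plan is to bound $\|y-x\|$ in terms of $d_{u_n}(x,y)$ using extrinsic hyperbolic geometry, combined with the uniform height bounds on the horographs provided by Lemma~\ref{lem : uniform seq}.

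\textbf{Step 1: Uniform slab containment.} From Lemma~\ref{lem : uniform seq}(1) I get constants $c_1 < c_2$ such that $c_1 \leq u_n(x) \leq c_2$ for every $x \in \R^2$ and every $n$. Hence every horograph $S_n$ is contained in the horizontal slab $\{(z,s)_E \in \R^2\times(0,\infty) : e^{-c_2}\leq s \leq e^{-c_1}\}$ of the upper half-space model.

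\textbf{Step 2: Intrinsic dominates extrinsic.} Since $S_n$ is convex (hence $d_{u_n}$ is an intrinsic metric on a subset of $\H^3$), any rectifiable curve on $S_n$ between two points realizes a length at least that of its hyperbolic length in $\H^3$. Taking infima, one gets
\[
d_{\H^3}\bigl((x,e^{-u_n(x)})_E,(y,e^{-u_n(y)})_E\bigr) \leq d_{u_n}(x,y).
\]

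\textbf{Step 3: Euclidean bound via the hyperbolic distance formula.} Using the standard formula
\[
\cosh d_{\H^3}\bigl((x,s_1)_E,(y,s_2)_E\bigr) = 1 + \frac{\|x-y\|^2 + (s_1-s_2)^2}{2s_1 s_2},
\]
applied with $s_i = e^{-u_n(\cdot)}$ bounded between $e^{-c_2}$ and $e^{-c_1}$, one finds that $d_{u_n}(x,y)\leq \mathsf{diam}$ forces
\[
\|x-y\|^2 \leq 2e^{-2c_1}\bigl(\cosh(\mathsf{diam})-1\bigr).
\]
Call the square root of the right-hand side $R$; it is a constant independent of $x$, $y$ and $n$.

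\textbf{Step 4: Conclusion.} For every $x\in K$, every $n$, and every $y$ with $d_{u_n}(x,y)\leq \mathsf{diam}$, one has $\|y\|\leq \|x\|+R \leq \mathsf{sup}_{K}\|\cdot\|+R$. Hence the set inside the closure in the definition of $E(K)$ is contained in a fixed Euclidean ball of $\R^2$. Its closure is thus closed and bounded, hence compact. I do not foresee any genuine obstacle: the argument is essentially a packaging of the uniform horograph bound of Lemma~\ref{lem : uniform seq} with convexity, the only mildly subtle point being the use of extrinsic-vs-intrinsic distance (rather than trying to invoke Lemma~\ref{lem:bilip}, which requires the compact set up front and would be circular here).
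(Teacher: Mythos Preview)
Your proof is correct and follows essentially the same route as the paper's: bound the extrinsic hyperbolic distance by the intrinsic one, then use the explicit half-space distance formula together with the uniform height bounds from Lemma~\ref{lem : uniform seq} to control $\|x-y\|$. The only cosmetic difference is that you write the full distance formula including the vertical term $(s_1-s_2)^2$, whereas the paper cites the version with the full Euclidean numerator and then drops that (nonnegative) term silently; your bound $\|x-y\|^2 \le 2e^{-2c_1}(\cosh(\mathsf{diam})-1)$ coincides with the paper's $\|x-y\|\le \sqrt{2}\,e^{-c}\sqrt{\cosh(\mathsf{diam})-1}$.
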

\begin{proof}
We have $$d_{\H^3}\left(\left(x,e^{-u_n(x)}\right),\left(y,e^{-u_n(y)}\right)\right)\leq d_{u_n}(x,y)\leq \mathsf{diam}~,$$ 
but \cite[4.6.1]{rat}
$$\cosh d_{\H^3}\left(\left(x,e^{-u_n(x)}\right),\left(y,e^{-u_n(y)}\right)\right)=1+\frac{\|x-y\|^2}{2e^{-u_n(x)}e^{-u_n(y)}}$$ 
so
$$\|x-y\|\leq \sqrt{2}e^{-c}\sqrt{\cosh (\mathsf{diam})-1} $$
where $c$ is the uniform lower bound of the $u_n$.
The result follows because 
 $K$ is compact.

\end{proof}

\begin{cor}\label{lem:bound euclidien}
Let $K\subset \R^2$ be  compact. 
 Let $\mathcal{L}(K)$ be the set of shortest paths for any
$d_n$ between points of $K$ (we don't ask the shortest path to be contained in $K$).
Then there exists a constant $\alpha$ such that  $\forall c\in\mathcal{L}(K)$, $L_{\R^2}(c)\leq \alpha$.
\end{cor}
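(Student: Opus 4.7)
The plan is to sandwich the Euclidean length of shortest $d_{u_n}$-paths between two applications of the bilipschitz comparison provided by Lemma~\ref{lem:bilip}: one to bound $d_{u_n}$-distances from above on $K$, and a second, applied on a slightly enlarged compact set, to translate the resulting $L_{u_n}$-bound into an $L_{\R^2}$-bound.

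First I would fix a closed Euclidean ball $B\supset K$ and invoke Lemma~\ref{lem:bilip} on $B$ to obtain a constant $\lambda_2>0$, independent of $n$, such that for any $p,q\in K\subset B$ the straight Euclidean segment $\sigma_{pq}\subset B$ satisfies $L_{u_n}(\sigma_{pq})\leq\lambda_2\|p-q\|$. Hence
$$d_{u_n}(p,q)\leq L_{u_n}(\sigma_{pq})\leq\lambda_2\operatorname{diam}_{\R^2}(K)=:M$$
for every $n$. In particular, any $c\in\mathcal{L}(K)$ joining points $p,q\in K$ has $L_{u_n}(c)=d_{u_n}(p,q)\leq M$, so every point $r$ on the image of $c$ satisfies $d_{u_n}(p,r)\leq M$.

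Next I would repeat the computation in the proof of Lemma~\ref{lem: E(K)} with $M$ in place of $\mathsf{diam}$, to conclude that the closed set
$$E_M(K):=\overline{\bigcup_{x\in K}\bigcup_n\{y\in\R^2 \mid d_{u_n}(x,y)\leq M\}}$$
is compact; by the previous paragraph, it contains the image of every $c\in\mathcal{L}(K)$.

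Finally I would apply Lemma~\ref{lem:bilip} a second time, now to the compact set $E_M(K)$, producing a constant $\lambda_1>0$ (independent of $n$) such that $\lambda_1 L_{\R^2}(c)\leq L_{u_n}(c)\leq M$ for every Lipschitz curve $c$ contained in $E_M(K)$ and every $n$. Setting $\alpha:=M/\lambda_1$ yields the claim. The only mildly delicate point is that a shortest $d_{u_n}$-path may leave $K$, which is precisely what forces the enlargement to $E_M(K)$; once that compact set has been identified and shown to be compact by the argument of Lemma~\ref{lem: E(K)}, no substantial obstacle remains.
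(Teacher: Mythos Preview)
Your proof is correct and follows essentially the same route as the paper: bound $d_{u_n}$ on $K$ via Lemma~\ref{lem:bilip}, trap the shortest paths in a compact enlargement of $K$ via the computation of Lemma~\ref{lem: E(K)}, and apply Lemma~\ref{lem:bilip} again on that enlargement. You are in fact slightly more careful than the paper, which simply cites the set $E(K)$ of Lemma~\ref{lem: E(K)} (defined with the constant $\mathsf{diam}$) without verifying that $\lambda_2(K)\operatorname{diam}_{\R^2}(K)\leq\mathsf{diam}$; your introduction of $E_M(K)$ with $M=\lambda_2\operatorname{diam}_{\R^2}(K)$ makes this step explicit.
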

%Note that we are not deducing and not supposing that the geodesic 
%is contained in $K$.
\begin{proof}
By Lemma~\ref{lem:bilip} applied to $K$, for any $x,y\in K$, 
$$d_{u_n}(x,y)\leq \lambda_2(K) \operatorname{diam}(K)~.$$ Let $c$ be 
a shortest path for $u_n$ between $x$ and $y$, so
that $L_{u_n}(c)=d_{u_n}(x,y)$. 

The shortest path is contained in the compact set $E(K)$ given by Lemma~\ref{lem: E(K)}, so by Lemma~\ref{lem:bilip} again,
but applied to $E(K)$, $L_{\R^2}(c)\leq \frac{\lambda_2(K)}{\lambda_1(E(K))}\operatorname{diam}(K)$.
\end{proof}

\begin{lem}\label{lem:conv uni}
Let $K\subset \R^2 $ be compact. Then 
$d_{u_n}$ uniformly converge to $d_u$ on $K$.
\end{lem}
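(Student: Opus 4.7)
Fix a compact set $K\subset\R^2$. By Corollary~\ref{lem:bound euclidien} and Lemma~\ref{lem: E(K)} there is a single compact set $\widehat{K}\supset K$ that contains every shortest path for any of the metrics $d_{u_n}$ (and, by the same argument applied to $u$, for $d_u$) joining two points of $K$, and these paths have Euclidean lengths bounded uniformly in $n$. Since the convergence $u_n\to u$ is uniform on $\widehat{K}$ (this is built into the construction of $u$), the quantity $\epsilon_n:=\|u_n-u\|_{L^\infty(\widehat{K})}$ tends to $0$. The strategy is to first prove pointwise length convergence along curves and then squeeze $d_{u_n}(x,y)$ between $d_u(x,y)$ in the usual upper/lower bound fashion.

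\textbf{Step 1 -- length convergence for a fixed Lipschitz curve.} I claim that $L_{u_n}(c)\to L_u(c)$ for any Lipschitz $c:[0,1]\to \widehat{K}$. In the integrand of~\eqref{eq:length} the horizontal contribution $e^{2u_n\circ c}\|c'\|^2$ converges to $e^{2u\circ c}\|c'\|^2$ uniformly in $t$. For the vertical contribution $((u_n\circ c)')^2$, note that the functions $F_n:=e^{-2u_n}+\|\cdot\|^2$ are convex by Proposition~\ref{prop:car u} and converge uniformly on $\widehat{K}$ to $F:=e^{-2u}+\|\cdot\|^2$; hence by \cite[Theorem~25.7]{Roc97}, $\nabla F_n(x)\to \nabla F(x)$ at every point of differentiability of $F$, and so $\nabla u_n\to \nabla u$ almost everywhere in $\widehat{K}$. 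A slicing (or polygonal-approximation) argument then gives $(u_n\circ c)'(t)\to (u\circ c)'(t)$ for almost every $t$, and the equi-Lipschitz bound of Corollary~\ref{cor : equilip} provides a uniform $L^\infty$ domination of the integrand. The dominated convergence theorem yields the claim.

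\textbf{Step 2 -- two-sided bound on the distance.} For the upper bound, pick $x,y\in K$ and a $d_u$-shortest path $c^u$ from $x$ to $y$ (existing by Hopf--Rinow and contained in $\widehat{K}$). Then
\[
d_{u_n}(x,y)\;\le\; L_{u_n}(c^u)\;\xrightarrow[n\to\infty]{}\; L_u(c^u)\;=\;d_u(x,y),
\]
whence $\limsup_n d_{u_n}(x,y)\le d_u(x,y)$. For the lower bound, let $c_n$ be a shortest $d_{u_n}$-path from $x$ to $y$; by Lemma~\ref{lem:bilip} the $c_n$'s are uniformly Lipschitz and their images lie in $\widehat{K}$, so Arzelà--Ascoli yields a subsequential uniform limit $c^*:[0,1]\to \widehat{K}$ joining $x$ and $y$. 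The lower semi-continuity of the length functional under uniform convergence of curves (valid because the integrand in~\eqref{eq:length} is convex in the lifted hyperbolic derivative), combined with the length convergence of Step~1, gives
\[
d_u(x,y)\;\le\;L_u(c^*)\;\le\;\liminf_{k} L_{u_{n_k}}(c_{n_k})\;=\;\liminf_{k} d_{u_{n_k}}(x,y).
\]
The usual subsequence-of-subsequences argument then yields $\liminf_n d_{u_n}(x,y)\ge d_u(x,y)$.

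\textbf{Uniformity and the main obstacle.} All the constants and moduli of convergence in Steps~1 and~2 depend only on $\widehat{K}$ and on $\epsilon_n$, not on the particular pair $(x,y)\in K^2$, so the pointwise convergence is uniform on $K\times K$. The point I expect to require most care is the almost everywhere convergence of $(u_n\circ c)'$ along the curve in Step~1: Rockafellar's theorem delivers $\nabla u_n\to \nabla u$ on a set of full two-dimensional measure in $\widehat{K}$, whereas the image of a Lipschitz curve carries zero two-dimensional measure. The clean way around this is to verify Step~1 first on polygonal curves -- on each linear segment the restriction of $F_n$ is a uniformly converging sequence of one-dimensional convex functions, and the classical one-dimensional derivative convergence applies -- and then to pass to an arbitrary Lipschitz curve by density together with the length-space convergence already established.
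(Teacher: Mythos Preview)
Your approach differs from the paper's and has a genuine gap in Step~1 that your proposed remedy does not close.

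The paper avoids derivative convergence entirely. Since $u_n \to u$ uniformly, for large $n$ one has $u \le u_n + \epsilon$; as $u_n+\epsilon$ is again horoconvex (Corollary~\ref{cor: ajout cst}), Lemma~\ref{cor:BF2} gives $d_u \le d_{u_n+\epsilon} + 4\epsilon$. The key observation is that adding a constant to $u_n$ leaves the term $((u_n\circ c)')^2$ in~\eqref{eq:length} untouched and only multiplies $e^{2u_n\circ c}\|c'\|^2$ by $e^{2\epsilon}$; hence along a $d_{u_n}$-shortest path $c$ one gets $L_{u_n+\epsilon}(c)\le L_{u_n}(c)+\sqrt{e^{2\epsilon}-1}\,\beta(K)$, where $\beta(K)$ comes from the uniform bound on $u_n$ and Corollary~\ref{lem:bound euclidien}. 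Swapping the roles of $u$ and $u_n$ gives the other inequality, and the estimate is manifestly uniform in $(x,y)\in K\times K$.

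Your gap is precisely the one you flag, but the fix is incomplete. Restricting to a line segment does give $(u_n\circ p)' \to (u\circ p)'$ a.e.\ (the restrictions of $e^{-2u_n}+\|\cdot\|^2$ to the segment are one-dimensional convex functions converging uniformly), so $L_{u_n}(p)\to L_u(p)$ for each fixed polygonal $p$. But to exploit this for the upper bound you then need $d_u(x,y)=\inf\{L_u(p):p\text{ polygonal}\}$, and this is not obvious: Euclidean segments are not $d_u$-geodesics, and inscribing a Euclidean polygon in a $d_u$-geodesic $c^u$ need not produce $L_u(p)\le L_u(c^u)+\epsilon$, since length is only lower semicontinuous under uniform curve approximation. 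One would need something like $L_u([a,b]) \le (1+o(1))\,d_u(a,b)$ as $\|b-a\|\to 0$ uniformly on $\widehat K$, which amounts to a modulus of continuity for $\nabla u$ that horoconvexity alone does not supply. Your lower bound, by contrast, is fine once phrased correctly: the lifted curves $(c_n,e^{-u_n\circ c_n})$ converge uniformly in $\H^3$ to $(c^*,e^{-u\circ c^*})$, and lower semicontinuity of $\H^3$-length does the job without any appeal to Step~1. The paper's constant-shift trick bypasses all of this.
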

\begin{proof}
Let $\epsilon >0$. As $u_n$ uniformly converge to $u$, 
for $n$ sufficiently large, $u \leq u_n + \epsilon$, so
 by Lemma~\ref{cor:BF2}
$$d_u \leq d_{u_n+\epsilon}+4\epsilon~. $$

Let $x,y\in K$ and let $c$ be a shortest path
for $u_n$ between $x$ and $y$. Then 
$$d_{u_n+\epsilon}(x,y)\leq L_{u_n+\epsilon}(c) $$
and from \eqref{eq:length}, the fact that the $u_n$ are uniformly bounded and Corollary~\ref{lem:bound euclidien},
there exists a constant $\beta$ depending only on $K$
such that
$$L_{u_n+\epsilon}(c)\leq L_{u_n}(c)+\sqrt{|e^{2\epsilon}-1|}\beta(K)$$
and as $L_{u_n}(c)=d_{u_n}(x,y)$ we obtain
$$d_u(x,y)-d_{u_n}(x,y) \leq  4 \epsilon+\sqrt{|e^{2\epsilon}-1|}\beta(K)~.$$

Exchanging the role of $u$ and $u_n$, we finally obtain
$$|d_u(x,y)-d_{u_n}(x,y)|\leq  4 \epsilon+\sqrt{|e^{2\epsilon}-1|}\beta(K)~.$$

\end{proof}

\begin{rem}
A result of A.D.~Alexandrov gives a weaker convergence of the induced metrics
for any convex surfaces converging in the Hausdorff sense to a convex surface
in the hyperbolic space, see \cite{slu}.
\end{rem}

Let $\tilde{\phi}_n$ be the linear isomorphism sending 
$a$ to $a_n$ and $b$ to $b_n$.
Hence clearly, for any $\gamma\in \Gamma$,
\begin{equation*}\label{eq:equivariance}\tilde{\phi}_n
(\gamma\cdot x)=\tilde{\phi}_n(\gamma)\cdot \tilde{\phi}_n(x)\end{equation*}
where $\gamma$ is considered as a vector of $\R^2$.
The  map
$\tilde{\phi}_n$
 descends to a homeomorphism $\phi_n$ between $\R^2/\Gamma$ and $\R^2/\Gamma_n$.

\begin{lem}\label{lem:conv phi}
Let $K\subset \mathbb{R}^2$ a compact set. Then on $K$,
$$x\mapsto d_{u_n}(\tilde{\phi}_n(x),x) $$
uniformly converge to zero and 
$d_{u_n}(\tilde{\phi}_n(x),\tilde{\phi}_n(y)) $
uniformly converge to $d_{u}(x,y)$.
\end{lem}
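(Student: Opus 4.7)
My plan is to reduce both statements to two facts: first, that the linear maps $\tilde{\phi}_n$ converge to the identity uniformly on compact sets, and second, that the metrics $d_{u_n}$ are uniformly bilipschitz equivalent to the Euclidean metric on compact sets (Lemma~\ref{lem:bilip}), combined with Lemma~\ref{lem:conv uni}.

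\textbf{Step 1: Uniform convergence of $\tilde{\phi}_n$ to the identity.} Since $a_n\to a$ and $b_n\to b$ and $(a,b)$ is a basis of $\R^2$, the matrices representing $\tilde{\phi}_n$ in the basis $(a,b)$ are the identity, so in the standard basis they converge to the identity matrix. In particular, for every compact set $K\subset\R^2$,
\[
\sup_{x\in K}\|\tilde\phi_n(x)-x\|\longrightarrow 0.
\]
Fix a compact set $K'\supset K$ containing $\tilde\phi_n(K)$ for all sufficiently large $n$ (possible since $\tilde\phi_n(K)$ converges to $K$ in the Hausdorff sense).

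\textbf{Step 2: First claim.} By Lemma~\ref{lem:bilip} applied to $K'$, there exists $\lambda_2>0$ (independent of $n$) such that for $x\in K$,
\[
d_{u_n}(\tilde\phi_n(x),x)\le \lambda_2\,\|\tilde\phi_n(x)-x\|.
\]
The right-hand side goes to zero uniformly in $x\in K$ by Step 1, which gives the first claim.

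\textbf{Step 3: Second claim.} For $x,y\in K$ we write, using the triangle inequality twice,
\[
\bigl|d_{u_n}(\tilde\phi_n(x),\tilde\phi_n(y))-d_u(x,y)\bigr|
\le d_{u_n}(\tilde\phi_n(x),x)+d_{u_n}(\tilde\phi_n(y),y)+\bigl|d_{u_n}(x,y)-d_u(x,y)\bigr|.
\]
The first two terms tend to $0$ uniformly in $x,y\in K$ by Step 2. The third term tends to $0$ uniformly in $x,y\in K$ by Lemma~\ref{lem:conv uni}. This establishes the second claim.

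\textbf{Main obstacle.} There is no serious obstacle: the proof is essentially a bookkeeping exercise combining the linear-algebraic convergence $\tilde\phi_n\to\id$ with the two uniform control results (Lemma~\ref{lem:bilip} and Lemma~\ref{lem:conv uni}) already established. The only point requiring a little care is ensuring that when we apply Lemma~\ref{lem:bilip} we do so on a fixed compact set $K'$ which, for $n$ large enough, contains $\tilde\phi_n(K)$ as well as $K$; this way the constants $\lambda_1,\lambda_2$ do not depend on $n$.
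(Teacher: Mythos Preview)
Your proof is correct and follows exactly the same route as the paper's: uniform convergence of $\tilde\phi_n$ to the identity, then Lemma~\ref{lem:bilip} for the first claim, then the triangle inequality combined with Lemma~\ref{lem:conv uni} for the second. You are in fact more careful than the paper, spelling out the need for a fixed compact $K'\supset K\cup\tilde\phi_n(K)$ so that the bilipschitz constants from Lemma~\ref{lem:bilip} are independent of $n$.
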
 
\begin{proof}
 $\tilde{\phi}_n$ converge to the identity map, uniformly on any compact for the Euclidean metric.
The first result follows from  
Lemma~\ref{lem:bilip}.
The second result follows easily from the first one, the triangle inequality and 
Lemma~\ref{lem:conv uni}.
\end{proof}

\begin{lem}\label{lem: def K}
There exists a compact set $K\subset \R^2$ such that for any $n$, for any $p,q$ in the torus, a lift of a shortest path for $m_n$ between $p$ and $q$ is contained in $K$.
\end{lem}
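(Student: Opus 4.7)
The plan is to take $K = E(D)$, where $D = \overline{B^{\R^2}_\delta(0)}$ is the compact fundamental domain furnished by the second item of Lemma~\ref{lem : uniform seq} and $E(\cdot)$ is the operator defined in Lemma~\ref{lem: E(K)}. By Lemma~\ref{lem: E(K)} this $K$ is compact, uniformly in $n$, and by construction it contains every $d_{u_n}$-ball of radius $\mathsf{diam}$ centered at a point of $D$.

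To verify that $K$ works, fix $n$ and two points $p,q$ on the torus $\R^2/\Gamma_n$, and let $c\colon[0,1]\to \R^2/\Gamma_n$ be a shortest path for $m_n$ from $p$ to $q$. First I would choose a convenient lift of the starting point: pick any lift of $p$ to $\R^2$ and, using the second item of Lemma~\ref{lem : uniform seq}, translate it by an element of $\Gamma_n$ to obtain a lift $\tilde{p}\in D$. Lift $c$ to the unique path $\tilde{c}\colon[0,1]\to \R^2$ starting at $\tilde{p}$; since the covering map $\R^2\to\R^2/\Gamma_n$ is a local isometry for $(d_{u_n},m_n)$, one has
\[
L_{u_n}(\tilde{c}|_{[0,t]}) = L_{m_n}(c|_{[0,t]}) \leq L_{m_n}(c) = d_{m_n}(p,q) \leq \mathsf{diam}
\]
for every $t\in[0,1]$, where the last inequality is the uniform diameter bound coming from Lemma~\ref{lem:diam conv}.

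Consequently $d_{u_n}(\tilde{p},\tilde{c}(t))\leq L_{u_n}(\tilde{c}|_{[0,t]})\leq \mathsf{diam}$ for every $t$, so $\tilde{c}(t)$ lies in the set $\{y:d_{u_n}(\tilde{p},y)\leq\mathsf{diam}\}$ which, since $\tilde{p}\in D$, is contained in $E(D)=K$. The image of $\tilde{c}$ is thus contained in $K$, which is what was to be proved.

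There is no real obstacle here beyond assembling the already-proved uniform ingredients: the uniform diameter bound from Lemma~\ref{lem:diam conv}, the uniform fundamental domain $D$ from Lemma~\ref{lem : uniform seq}, and the compactness of metric balls of radius $\mathsf{diam}$ around $D$ from Lemma~\ref{lem: E(K)} (which in turn rests on the uniform lower bound on the $u_n$). The only point requiring a small amount of care is the choice of which lift of $p$ to start from; once one uses the $\Gamma_n$-translate landing in $D$, the diameter bound pins the whole lifted shortest path inside the prescribed compact set.
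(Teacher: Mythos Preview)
Your proof is correct and follows exactly the same route as the paper: set $K=E(D)$, lift $p$ into $D$ via Lemma~\ref{lem : uniform seq}, and use the diameter bound to trap the lifted shortest path in $E(D)$. The paper's version is terser---it simply notes that the $d_{u_n}$-ball of radius $m_n(p,q)$ around the lift of $p$ sits in $K$---whereas you spell out the intermediate inequality $d_{u_n}(\tilde p,\tilde c(t))\le L_{u_n}(\tilde c|_{[0,t]})\le\mathsf{diam}$, but the argument is the same.
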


\begin{proof}
Let $K=E(D)$, where $D$ is the compact set  obtained in Lemma~\ref{lem : uniform seq}.
By definition of $D$, there exists a lift $x$ of $p$ in $D$. 
By construction of $K$, the ball for $d_n$ centred at $x$ with radius $m_n(p,q)$ is contained in $K$. 
\end{proof}

Let $C$ be the closure of $\cup_n \tilde{\phi}_n(K)$, where
$K$ is given by Lemma~\ref{lem: def K}. $C$ is a compact set.

\begin{lem}\label{lem:lift geod}
For any $\nu > 0$, if $n$ is sufficiently large,  for any $p,q\in T$, if $x$ and  $y$ are respective lifts to the set $C$ defined above, if $d_{u}(x,y)=m(p,q)$, then  
$$m_n(\phi_n(p),\phi_n(q))\leq d_{u_n}(\tilde{\phi}_n(x),\tilde{\phi}_n(y)) \leq
m_n(\phi_n(p),\phi_n(q))+\nu~. $$
\end{lem}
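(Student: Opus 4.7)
The plan is to prove the two inequalities separately. The left one is immediate from the definition of the quotient metric: since $\tilde\phi_n(x)$ and $\tilde\phi_n(y)$ are $\Gamma_n$-lifts of $\phi_n(p)$ and $\phi_n(q)$ respectively, $m_n(\phi_n(p),\phi_n(q)) \le d_{u_n}(\tilde\phi_n(x),\tilde\phi_n(y))$.

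For the right inequality, the idea is to transport an optimal pair of $m_n$-lifts back to $\R^2$ via $\tilde\phi_n^{-1}$ and then exploit optimality of the pair $(x,y)$ together with the uniform convergence in Lemma~\ref{lem:conv phi}. First, using Lemma~\ref{lem: def K}, I would pick lifts $\hat x_n,\hat y_n\in K$ of $\phi_n(p),\phi_n(q)$ realising $d_{u_n}(\hat x_n,\hat y_n)=m_n(\phi_n(p),\phi_n(q))$. Set $\tilde x_n:=\tilde\phi_n^{-1}(\hat x_n)$ and $\tilde y_n:=\tilde\phi_n^{-1}(\hat y_n)$; these are $\Gamma$-lifts of $p$ and $q$, and since $\tilde\phi_n\to\mathrm{id}$ uniformly on compact sets, for $n$ large enough they lie in a fixed compact set $K'\supset K$.

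Write $\tilde x_n=\alpha_n\cdot x$ and $\tilde y_n=\beta_n\cdot y$ for suitable $\alpha_n,\beta_n\in\Gamma$. Since $u$ is $\Gamma$-invariant (Lemma~\ref{lem : well-def}), $\Gamma$ acts by isometries of $d_u$; combined with the optimality hypothesis $d_u(x,y)=m(p,q)$ this gives
$$d_u(\tilde x_n,\tilde y_n)=d_u(x,\alpha_n^{-1}\beta_n y)\ge d_u(x,y)=m(p,q).$$
Applying Lemma~\ref{lem:conv phi} to the fixed compact $K'':=C\cup K'$ yields, for every $\epsilon>0$, some $N$ such that for all $n\ge N$ and all $a,b\in K''$, $|d_{u_n}(\tilde\phi_n(a),\tilde\phi_n(b))-d_u(a,b)|<\epsilon$. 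Specialising to $(a,b)=(x,y)\in C\times C$ gives $d_{u_n}(\tilde\phi_n(x),\tilde\phi_n(y))\le m(p,q)+\epsilon$, while specialising to $(a,b)=(\tilde x_n,\tilde y_n)\in K'\times K'$ gives $m_n(\phi_n(p),\phi_n(q))=d_{u_n}(\hat x_n,\hat y_n)\ge d_u(\tilde x_n,\tilde y_n)-\epsilon\ge m(p,q)-\epsilon$. Chaining the two bounds yields $d_{u_n}(\tilde\phi_n(x),\tilde\phi_n(y))\le m_n(\phi_n(p),\phi_n(q))+2\epsilon$, and choosing $\epsilon=\nu/2$ closes the argument.

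The main subtlety, and the reason the lemma is phrased with the specific compact $C$, is to guarantee that $N$ above is independent of $p$ and $q$. This is exactly why every relevant point must lie in a fixed compact set that does not depend on $p,q$: $x,y\in C$ by hypothesis, $\hat x_n,\hat y_n$ lie in the uniform compact $K$ of Lemma~\ref{lem: def K}, and $\tilde x_n,\tilde y_n$ lie in the slightly enlarged compact $K'$ because $\tilde\phi_n\to\mathrm{id}$ uniformly on compact sets.
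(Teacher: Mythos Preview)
Your proof is correct and follows essentially the same strategy as the paper's: the left inequality is trivial, and for the right one you exploit the optimality inequality $d_u(x,\gamma y)\ge d_u(x,y)$ together with the uniform convergence of Lemma~\ref{lem:conv phi} on a fixed compact set. The paper applies the convergence directly to the optimal $\gamma$ without passing through the intermediate value $m(p,q)$, whereas you route through $m(p,q)$ via two separate applications of Lemma~\ref{lem:conv phi}; your version is slightly more explicit about the compactness bookkeeping (introducing $K'$ to control $\tilde\phi_n^{-1}(K)$), but the underlying idea is identical.
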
 
\begin{proof}
As $d_{u}(x,y)=m(p,q)$, for any $\gamma\in\Gamma$, 
$$d_u(x,\gamma\cdot y) \geq d_u(x,y)~.$$
By Lemma~\ref{lem:conv phi}, uniformly on $C$, if $n$ is sufficiently large,
$$ d_{u_n}(\tilde{\phi}_n(x),\tilde{\phi}_n(\gamma)\cdot \tilde{\phi}_n(y))+\nu \geq  d_{u_n}(\tilde{\phi}_n(x),\tilde{\phi}_n(y))~.$$
The result follows because $m_n(\phi_n(p),\phi_n(q))$
is the minimum on $\Gamma$ of all the $ d_{u_n}(\tilde{\phi}_n(x),\tilde{\phi}_n(\gamma)\cdot \tilde{\phi}_n(y))$.
\end{proof} 

\begin{proof}[Proof of Proposition~\ref{prop:conv metric}]

Let $\epsilon >0$.
For any $p,q\in T$, with the notations of Lemma~\ref{lem:lift geod},
 for $n$ large enough,
 $$m_n(\phi_n(p),\phi_n(q)) - m(p,q) $$
$$\leq  d_{u_n}(\tilde{\phi}_n(x),\tilde{\phi}_n(y))-d_u(x,y)~. $$

By Lemma~\ref{lem:conv phi} applied
on the compact set $C$, for $n$ sufficiently large, the last quantity above is less than $\epsilon$, independently of $x$ and $y$. For the same reasons, the same conclusion holds
for
$$ m(p,q) - m_n(\phi_n(p),\phi_n(q)) $$
$$\leq d_u(x,y)- d_{u_n}(\tilde{\phi}_n(x),\tilde{\phi}_n(y))+\nu~. $$

\end{proof}

\bibliographystyle{alpha}
\bibliography{existence}

\end{document}